\numberwithin{equation}{section}
\theoremstyle{plain}
\newtheorem{thm}{Theorem}[section]
\newtheorem{cor}[thm]{Corollary}
\newtheorem{lem}[thm]{Lemma}
\newtheorem{prop}[thm]{Proposition}
\newtheorem{defn}[thm]{Definition}
\newtheorem{exm}[thm]{Example}
\newtheorem{rem}[thm]{Remark}
\newcommand{\Hom}{\operatorname{Hom}\nolimits}
\newcommand{\End}{\operatorname{End}\nolimits}
\newcommand{\op}{\operatorname{op}\nolimits}
\renewcommand{\mod}{\mathsf{mod}\hspace{.01in}}
\newcommand{\B}{\mathcal B}
\newcommand{\uB}{\underline{\B}}
\newcommand{\U}{\mathcal U}
\newcommand{\V}{\mathcal V}
\newcommand{\W}{\mathcal W}
\newcommand{\h}{\mathcal H}
\newcommand{\s}{\mathcal S}
\newcommand{\T}{\mathcal T}
\newcommand{\D}{\mathcal D}
\newcommand{\X}{\mathscr X}
\newcommand{\Y}{\mathscr Y}
\newcommand{\C}{\mathcal C}
\newcommand{\EE}{\mathbb E}
\newcommand{\svecv}[2]{\left(\begin{smallmatrix}
      #1 \\
      #2
    \end{smallmatrix}\right)}
\newcommand{\svech}[2]{\left(\begin{smallmatrix}
      #1 & #2
\end{smallmatrix}\right)}
\renewcommand{\emph}{\textit}
\renewcommand{\phi}{\varphi}
\newcommand{\add}{\mathsf{add}\hspace{.01in}}
\begin{document}

\title{Hearts of twin cotorsion pairs revisited: Integral and Abelian Hearts}\footnote{\hspace{1em}Yu Liu is supported by the National Natural Science Foundation of China (Grant No. 12171397). Panyue Zhou is supported by the Hunan Provincial Natural Science Foundation of China (Grant No. 2023JJ30008) and by the National Natural Science Foundation of China (Grant No. 11901190). }
\author{Yu Liu and Panyue Zhou}
\address{ School of Mathematics and Statistics, Shaanxi Normal University, 710062 Xi'an, Shannxi, P. R. China}
\email{recursive08@hotmail.com}

\address{School of Mathematics and Statistics, Changsha University of Science and Technology, 410114 Changsha, Hunan, P. R. China}
\email{panyuezhou@163.com}

\begin{abstract}
Hearts of twin cotorsion pairs are shown to be quasi-abelian. But they are not always integral. In this article,
we provide a sufficient and necessary condition for the hearts of twin cotorsion pairs being integral (resp. abelian).

\end{abstract}
\keywords{twin cotorsion pair; heart; integral; abelian}
\subjclass[2020]{18E10; 18G80}
\maketitle

\section{Introduction}

The notion of a cotorsion pair was first introduced by Salce \cite{Sa}. Since then, it has been extensively studied in representation theory, particularly in tilting theory and Cohen-Macaulay modules. On triangulated categories, cotorsion pair unifies the notion of $t$-structures in the sense of \cite{BBD}, co-$t$-structures in the sense of Pauksztello \cite{P}, and the notion of cluster tilting subcategories in the sense of Keller-Reiten \cite{KR} (see also \cite{BMRRT}).

Let $\C$ be a triangulated category with shift functor $[1]$. A \emph{cotorsion pair} on $\C$ is a pair of full subcategories $(\U,\V)$ such that $\Hom_\C(\U,\V[1])=0$ and any object $C\in \C$ admits a triangle $V\to U\to C\to V[1]$ with $U\in \U$, $V\in \V$. In fact, $(\U,\V)$ is a cotorsion pair if and only if $(\U,\V[1])$ is a torsion pair. Nakaoka \cite{N1} defined the hearts of cotorsion pairs on triangulated categories and showed that they are abelian.

Assume that $\C$ is Krull-Schmidt, Hom-finite. If $T$ is a cluster tilting object in $\C$, then $(\add T[1],\add T[1])$ is a cotorsion pair. By the result of Koenig and Zhu \cite[Theorem 3.3]{KZ}, the ideal quotient $\C/\add(T[1])$ is abelian. In fact, it is the heart of $(\add(T[1]),\add(T[1]))$ and is equivalent to $\mod \End_\C(T)^{\op}$. However, when $T$ is just rigid, generally speaking, $\C/\add(T[1])$ is no longer abelian. When $\C$ has a Serre functor, Buan and Marsh \cite{BM} considered the quotient $\C/\X_T$ instead, where $\X_T=\{C\in \C ~|~ \Hom_\C(T,-)=0\}$. Note that if $T$ is cluster tilting, we have $\X_T=\add(T[1])$. Unfortunately, even in this case,
$\C/\X_T$ is not abelian in general. But they showed that $\C/\X_T$ still admits a good structure. This quotient category is integral, which means that the localization of this category with respect to the epic-monic morphisms is abelian (it is equivalent to $\mod \End_\C(T)^{\op}$).

Integral category is a subclass of preabelian category which was first studied systematically by Rump \cite{R}. One of Rump's key observations was that the class of epic-monic morphisms in an integral category can be treated by using  calculus of fractions (defined by Gabriel and Zisman \cite{GM}). As a result, the canonical localization of the category at this class is an abelian category. Now we recall the definition of integral category (for the convenience of the readers, we also recall the definitions of semi-abelian and quasi-abelian category).

\begin{defn}
An additive category is called preabelian if any morphism admits a kernel and a cokernel. Let $\mathcal A$ be a preabelian category and the following diagram
$$\xymatrix{
A \ar[r]^{\mathbf{a}} \ar[d]_{\mathbf{b}} &B \ar[d]^{\mathbf{c}}\\
C \ar[r]_{\mathbf{d}} &D}
$$
is a pull-back diagram.
\begin{itemize}
\item[(1)] $\mathcal A$ is called left semi-abelian if $\mathbf{a}$ is an epimorphism whenever $\mathbf{d}$ is a cokernel;
\item[(2)] $\mathcal A$ is called left integral if $\mathbf{a}$ is an epimorphism whenever $\mathbf{d}$ is an epimorphism;
\item[(3)] $\mathcal A$ is called left quasi-abelian if $\mathbf{a}$ is a cokernel whenever $\mathbf{d}$ is a cokernel;
\end{itemize}
Dually, we can define right integral (resp. right semi-abelian, right quasi-abelian) category. $\mathcal A$ is called integral (resp. semi-abelian, quasi-abelian) if it is both left and right integral (resp. semi-abelian, quasi-abelian).
\end{defn}

We have the following relation:
$$\{ \mbox{Abelian Cat.} \}\subset\{ \mbox{Integral Cat.} \}\subset\{ \mbox{Semi-Abelian Cat.} \}\subset\{ \mbox{Preabelian Cat.} \}.$$
Under the condition Buan and Marsh considered, we can find a pair of cotorsion pairs $((\add T[1],\X_T)$, $(\X_T,\V))$. It is a special case of the concept \emph{twin cotorsion pair} defined in \cite{N2}. The ideal quotient $\C/\X_T$ is the heart of the twin cotorsion pair (also defined in \cite{N2}) $((\add T[1],\X_T),(\X_T,\Y))$. A pair of cotorsion pairs $((\s,\T),(\U,\V))$ is called a twin cotorsion pair if $\s\subseteq \U$. A cotorsion pair $(\U,\V)$ can be realized as a twin cotorsion pair $((\U,\V),(\U,\V))$. Heart of twin cotorsion pair is a generalization of heart of cotorsion pair, but it is not always abelian (it is even not integral in general). A result showed in \cite{N2} implies that if we have a twin cotorsion pair $((\s,\T),(\T,\V))$, then its heart is integral (the twin cotorsion pair hidden in \cite{BM} admits this condition). It is reasonable to consider further when the heart of a twin cotorsion pair becomes integral, since we can get an abelian category through localization. In this article, we will give an `` if and only if " condition for the hearts being integral (resp. abelian), under a more general setting.


Nakaoka and Palu \cite{NP} introduced the concept of extriangulated categories, which combines the characteristics of both triangulated and exact categories. On extriangulated categories, various concepts can be defined, such as (twin) cotorsion pairs \cite{NP}, their hearts \cite{LN}, all of which have similarities to their definitions on triangulated and exact categories.
Now we recall the definition of the heart of a twin cotorsion pair on an extriangulated category
from \cite{LN}. Let $(\B,\EE,\mathfrak{s})$ be an extriangulated category.

\begin{defn}\label{He}
For a twin cotorsion pair $((\s,\T),(\U,\V))$ on $\B$, let
\begin{itemize}
\item[(i)] $\B^+=\{X\in \B \text{ }|\text{ } \mbox{there exists an}~\text{ } \EE\text{-triangle } V\to W\to X\dashrightarrow \text{, }V\in \V \text{ and }W\in \U\cap\T \}$,
\item[(ii)] $\B^-=\{Y\in \B \text{ }|\text{ } \mbox{there exists an}~ \text{ } \EE\text{-triangle } Y\to W'\to S\dashrightarrow \text{, }S\in \s \text{ and }W'\in \U\cap\T \}$,
\item[(iii)] $\h=\B^+\cap \B^-$.
\end{itemize}
We call the ideal quotient $\h/(\U\cap \T)$ the heart of $((\s,\T),(\U,\V))$.  For a single cotorsion pair $(\U',\V')$, the heart of twin cotorsion pair $((\U',\V'),(\U',\V'))$ is called the heart of the $(\U',\V')$.
\end{defn}

For convenience, denote $\h/(\U\cap \T)$ by $\underline \h$. Hassoun and Shah \cite{HS} provided
a sufficient condition for the heart being integral, which unifies the results on triangulated categories \cite[Theorem 6.3]{N2} and exact categories \cite[Theorem 6.2]{L1}.

\begin{thm}\label{1}\cite[Theorem 3.7]{HS}
For a twin cotorsion pair $((\s,\T),(\U,\V))$ on $\B$,
\vspace{1mm}

{\rm (1)} if $\B$ has enough projectives $\mathcal P$, $\mathcal P\subseteq \U\cap\T$ and $\U\subseteq\s\ast\T$, then $\underline{\h}$ is integral;
\vspace{1mm}

{\rm (2)} if $\B$ has enough  injectives $\mathcal I$, $\mathcal I\subseteq \U\cap\T$ and $\T\subseteq\U\ast\V$, then $\underline{\h}$ is integral.
\end{thm}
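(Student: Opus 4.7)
The plan is to focus on part (1); part (2) then follows by passing to the opposite extriangulated category $\B^{\op}$, under which the twin cotorsion pair becomes $((\V,\U),(\T,\s))$, enough projectives are exchanged for enough injectives, and the condition $\U\subseteq\s\ast\T$ is replaced by $\T\subseteq\U\ast\V$. Since $\underline{\h}$ is already known to be quasi-abelian, and in particular semi-abelian, by the general theory developed in \cite{LN}, what remains is to upgrade this to integrality: in $\underline{\h}$, the pullback of any epimorphism must be an epimorphism, and dually the pushout of any monomorphism must be a monomorphism.

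The key technical ingredient is a characterization of epimorphisms in $\underline{\h}$ via $\E$-triangles in $\B$: a morphism $\underline{d}\colon C\to D$ is epic in $\underline{\h}$ iff it admits a representative fitting into an $\E$-triangle $C\to D\oplus W\to U\dashrightarrow$ with $W\in\U\cap\T$ and $U\in\U$. The hypothesis $\U\subseteq\s\ast\T$ then produces, for each such $U$, an $\E$-triangle $S\to U\to T'\dashrightarrow$ with $S\in\s$ and $T'\in\T$. Splicing these two $\E$-triangles via the octahedral-type axiom (ET4) yields a refined diagram of $\E$-triangles in which the ``cokernel part'' of $\mathbf{d}$ is controlled by the cotorsion pair $(\s,\T)$; this refinement is the step in which the hypothesis $\U\subseteq\s\ast\T$ does its essential work.

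For left integrality, given a pullback square
$$\xymatrix@R=14pt{A \ar[r]^{\mathbf{a}} \ar[d]_{\mathbf{b}} & B \ar[d]^{\mathbf{c}} \\ C \ar[r]_{\mathbf{d}} & D}$$
in $\underline{\h}$ with $\mathbf{d}$ epic, I would lift $\mathbf{c}$ and $\mathbf{d}$ to morphisms $c,d$ in $\h$, pull the refined $\E$-triangle of $d$ back along a suitable extension of $c$, and check that the result is an $\E$-triangle $A'\to B\oplus W\to U\dashrightarrow$ with $A'$ a representative of $A$ in $\h$. By the epimorphism characterization above, this identifies $\mathbf{a}$ as epic. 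Right integrality is obtained from a parallel pushout argument using the dual characterization of monomorphisms together with the same decomposition $\U\subseteq\s\ast\T$. The enough-projectives hypothesis $\mathcal P\subseteq\U\cap\T$ is used to guarantee that representatives can be chosen inside $\h$ and that the intermediate objects produced by the (ET4)-splicing and pullback construction remain in $\h=\B^+\cap\B^-$, so the whole argument descends cleanly to the quotient.

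The principal obstacle is the bookkeeping between $\B$ and the quotient $\underline{\h}$: morphisms in $\underline{\h}$ are equivalence classes modulo factorizations through $\U\cap\T$, and the pullback in $\underline{\h}$ is not in general the image of a pullback in $\B$. Thus one must verify that the pulled-back $\E$-triangle has its outer terms in the correct subcategories, that its middle object lies in $\h$, and that the resulting morphism agrees modulo $\U\cap\T$ with the pullback $\mathbf{a}$. The joint role of $\mathcal P\subseteq\U\cap\T$ and $\U\subseteq\s\ast\T$ --- the former to control projective components, the latter to supply the (ET4)-refinement converting $\U$-witnesses into a pullback-compatible form --- is what makes this descent go through.
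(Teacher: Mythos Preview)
Your plan is essentially the original \cite{HS} argument: lift a pullback square in $\underline{\h}$ to $\B$, use the $\EE$-triangle characterization of epimorphisms, splice in the decomposition $\U\subseteq\s\ast\T$ via (ET4), and use $\mathcal P\subseteq\U\cap\T$ to keep the intermediate objects in $\h$. This is a valid strategy, but it is \emph{not} the route taken in the present paper.

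The paper does not prove Theorem~\ref{1} directly. Instead it first establishes the general criterion (Theorem~\ref{main1}): $\underline{\h}$ is integral iff $\B^-\cap(\T\ast{_{\rm epi.}}\U)\subseteq\U$. It then shows in Proposition~\ref{gen} that the single hypothesis $\U\subseteq\s\ast\T$ already forces the stronger inclusion $\B^-\cap(\T\ast\U)\subseteq\U$, by a short diagram chase with $\EE$-triangles. Crucially, this chase uses neither enough projectives nor $\mathcal P\subseteq\U\cap\T$; those hypotheses in Theorem~\ref{1} turn out to be redundant. So the paper's approach buys a strict generalization, whereas your direct approach stays closer to \cite{HS} and genuinely consumes the projectives hypothesis in the bookkeeping step you describe.

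One small point in your sketch deserves care: you say right integrality comes from ``the same decomposition $\U\subseteq\s\ast\T$'' via a dual pushout argument. The naive dual would instead require $\T\subseteq\U\ast\V$, which you do not have in part~(1). The correct fix---and the one the paper uses at the start of the proof of Theorem~\ref{main1}---is to invoke Rump's result that in a semi-abelian category left integral is equivalent to right integral, so it suffices to prove only one side. You should make that reduction explicit rather than claim a parallel pushout argument.
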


In this article, we first provide an `` if and only if " condition for the hearts being integral.


\begin{thm}{\rm (see Definition \ref{epi} and Theorem \ref{main1} for details)}
For a twin cotorsion pair $((\s,\T),(\U,\V))$ on $\B$, $\underline \h$ is integral if and only if $\B^-\cap(\T*{_{\rm epi.}}\U)\subseteq \U$.
\end{thm}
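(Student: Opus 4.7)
The plan is to read the two integrality conditions through the quasi-abelian (hence preabelian) structure of $\underline{\h}$ already established in \cite{LN}, by translating pullback squares and epimorphisms into $\E$-triangles in $\B$ and membership conditions in $\B^\pm, \U, \T$. Two preparatory facts are required: (i) a concrete description of epimorphisms $\underline{f}:X\to Y$ in $\underline{\h}$ in terms of an $\E$-triangle with a distinguished term in $\T$, which makes the notation $\T*{_{\rm epi.}}\U$ read as ``objects of $\B$ arising as the third term of an $\E$-triangle $T\to U\to Z\dashrightarrow$ whose deflation is epic in the heart''; and (ii) a description of pullbacks in $\underline{\h}$ as coming from a homotopy pullback $\E$-triangle in $\B$, followed by a reduction of its corner into $\h$ modulo $(\U\cap\T)$. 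These two facts convert the axiomatic definition of integrality into a concrete closure property of subcategories of $\B$.

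For the ``only if'' direction, I would assume $\underline{\h}$ is integral and take $X\in \B^-\cap(\T*{_{\rm epi.}}\U)$. The witnessing $\E$-triangle $T\to U\to X\dashrightarrow$ descends to an epimorphism in $\underline{\h}$ after trimming $U$ into $\h$ using the $\B^-$-data of $X$. I would then exhibit a pullback square in $\underline{\h}$ with this morphism as one leg, invoke integrality on the opposite leg, and lift the resulting epimorphism back to an $\E$-triangle which, via the closure properties of the cotorsion pair $(\U,\V)$, forces $X\in \U$. For the ``if'' direction, take a pullback square in $\underline{\h}$ whose leg $\underline{d}$ is epic. Lifting to $\B$ and combining the defining $\B^\pm$-triangles of the corners with the homotopy pullback construction, the top-left corner is placed in $\B^-\cap(\T*{_{\rm epi.}}\U)$; the standing hypothesis then lands it in $\U$, which is precisely what witnesses $\underline{a}$ epic in $\underline{\h}$. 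Right integrality, and hence two-sided integrality, is expected to follow either by a genuine duality (using the swap $((\s,\T),(\U,\V))\rightsquigarrow ((\V,\U),(\T,\s))$ in $\B^{\rm op}$) or by checking that the single stated inclusion is self-dual when specialized to this quotient.

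The main obstacle will be making the pullback-to-$\E$-triangle dictionary precise. A pullback in the ideal quotient $\h/(\U\cap\T)$ does not correspond to a single $\E$-triangle operation in $\B$: one has to combine four defining $\B^\pm$-triangles with a homotopy pullback and then track which pieces survive modulo $(\U\cap\T)$, all while staying inside $\h$. A subtler point is the robustness of the class $\T*{_{\rm epi.}}\U$ under addition or deletion of summands from $\U\cap\T$, which is needed so that the stated inclusion is independent of representative choices; establishing this invariance should also be the cleanest route to seeing that the asymmetric-looking condition genuinely captures both left and right integrality at once.
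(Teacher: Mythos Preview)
Your high-level plan---translate pullbacks in $\underline\h$ into $\EE$-triangle constructions in $\B$ and back---matches the paper's, but three of the specific identifications you make are wrong and would derail the argument. First, $Z\in\X*\Y$ means $Z$ is the \emph{middle} term of an $\EE$-triangle $X\to Z\to Y\dashrightarrow$, not the third; so $\T*{_{\rm epi.}}\U$ consists of objects $Z$ admitting $T\to Z\to U\dashrightarrow$ with $T\in\T$ and $U\in{_{\rm epi.}}\U$, where ${_{\rm epi.}}\U$ is already a fixed subcategory of $\U$ (Definition~\ref{epi}), not an adjective attached to a deflation. Second, in the ``if'' direction you say the hypothesis places the top-left pullback corner in $\U$ and that this witnesses $\underline a$ epic. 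That cannot work: $\h\cap\U=\W$ (Lemma~\ref{int}), so the corner would become zero in $\underline\h$. What actually detects that $\underline f$ is epic is that its \emph{cofiber} $C_f$ (the third term of a $\W$-monic $\EE$-triangle $A\to B\to C_f\dashrightarrow$) lies in $\U$; see \cite[Corollary~2.26]{LN} and Lemma~\ref{L3}. The paper accordingly builds, from the homotopy-pullback data, an auxiliary object $Z$ playing the role of this cofiber and shows $Z\in\B^-\cap(\T*{_{\rm epi.}}\U)$; the hypothesis then gives $Z\in\U$. Third, the stated inclusion is not self-dual (its dual is $\B^+\cap(\T_{\rm mono.}*\U)\subseteq\T$), and the passage to two-sided integrality is not via any symmetry of the condition but via the fact that $\underline\h$ is already semi-abelian \cite[Theorem~2.32]{LN}, so by Rump a semi-abelian category is left integral iff right integral; only one side needs to be checked.

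For the ``only if'' direction your sketch is closer to the mark once the $*$-notation is fixed. From $T\to X\to U\dashrightarrow$ (with $X\in\B^-$, $T\in\T$, $U\in{_{\rm epi.}}\U$) and the epi-$\EE$-triangle $C\xrightarrow{d}D\to U\dashrightarrow$ one pulls back to obtain $B\in\h$ sitting simultaneously in $T\to B\xrightarrow{b}D\dashrightarrow$ (so $\underline b$ is monic by Lemma~\ref{+-}) and in $C\xrightarrow{r}B\to X\dashrightarrow$; forming the pullback of $\underline b$ along the epimorphism $\underline d$ in $\underline\h$ and invoking integrality forces the parallel arrow $\underline a$ epic, and since $\underline a$ factors through $\underline r$ (using that $\underline b$ is monic) one gets $\underline r$ epic, whence $X\in\U$ by Lemma~\ref{L3}.
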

%
Some `` if and only if " conditions for the hearts being abelian are discussed for the twin cotorsion pairs induced by $n$-cluster ($n>2$) tilting subcategories (see \cite{L2} and \cite{HZ}); for general case in \cite{LYZ}. Our second main result provides another sufficient and necessary condition for the heart of a twin cotorsion pair being abelian.

\begin{thm}{\rm (see Definition \ref{epi} and Theorem \ref{main2} for details)}
For a twin cotorsion pair $((\s,\T),(\U,\V))$ on $\B$, denote by $\underline {\h_1}$ the heart of $(\s,\T)$ and
$\underline {\h_2}$ the heart of $(\U,\V)$. Then
$\underline \h$ is abelian if and only if the following conditions are satisfied:
$${\rm (1)}~\underline \h=\underline {\h_1}\cap \underline {\h_2};\quad {\rm (2)}~ {_{\rm epi.}}\U\subseteq\s\oplus\W;\quad {\rm (3)}~ \T_{\rm mono.}\subseteq\V\oplus\W.$$

\end{thm}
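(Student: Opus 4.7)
The plan is to prove both implications by combining the characterization of integrality from Theorem \ref{main1}, the fact (recalled earlier in the paper) that the heart of a single cotorsion pair is abelian, and the general fact that $\underline{\h}$ is quasi-abelian. Recall that a quasi-abelian category is abelian exactly when it is also integral, or equivalently, when every monomorphism is a kernel and every epimorphism is a cokernel. So the task reduces to identifying what additional structure beyond integrality is needed to turn $\underline{\h}$ into an abelian category.

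First I would handle the forward direction. Assuming $\underline{\h}$ is abelian, hence integral, the inclusion $\underline{\h} \subseteq \underline{\h_1} \cap \underline{\h_2}$ in condition (1) should follow directly from the defining $\EE$-triangles of $\B^+$ and $\B^-$: using $\s \subseteq \U$ and $\V \subseteq \T$, the witness triangles for $X \in \B^+ \cap \B^-$ under the twin pair can be reinterpreted, possibly after modifying the middle term by a suitable cotorsion resolution, as witnesses for membership in the analogous sets attached to $(\s,\T)$ and to $(\U,\V)$. The reverse inclusion and the decomposition conditions (2) and (3) would be extracted by applying the abelian properties to canonical morphisms coming from objects in ${_{\rm epi.}}\U$ and $\T_{\rm mono.}$: a typical $U \in {_{\rm epi.}}\U$ produces an epimorphism in $\underline{\h}$ which must be a cokernel in the abelian category, and unpacking this back to $\B$ should yield a splitting of $U$ into an $\s$-summand and a $\W$-summand; condition (3) is obtained by the dual argument.

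For the reverse direction, assume (1), (2), (3). Condition (1) realizes $\underline{\h}$ as a full subcategory of each of the abelian categories $\underline{\h_1}$ and $\underline{\h_2}$. The strategy is to show that for any morphism $f$ in $\underline{\h}$, the kernels and cokernels computed in either $\underline{\h_1}$ or $\underline{\h_2}$ actually lie in $\underline{\h}$ and agree, making $\underline{\h}$ into an abelian subcategory. Conditions (2) and (3) carry the main weight here: condition (3), $\T_{\rm mono.} \subseteq \V \oplus \W$, should guarantee that the $\EE$-triangle in $\B$ underlying a monomorphism of $\underline{\h}$ has a cone splitting off a $\V$-summand (which contributes to the cokernel computed in $\underline{\h_2}$) and a $\W$-summand (which vanishes in the quotient), so that the kernel of the cokernel recovers the original mono; condition (2) is applied symmetrically to epimorphisms.

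The main obstacle will be the bookkeeping required to translate the decomposition statements (2) and (3), which live in $\B$ and involve the core $\W = \U \cap \T$, into kernel/cokernel statements in $\underline{\h}$. Concretely, one has to lift a mono in $\underline{\h}$ to an $\EE$-triangle in $\B$, identify the cone as an object in $\T_{\rm mono.}$, apply (3) to split off the $\V \oplus \W$ part, and push the splitting back down to the quotient while verifying the universal property of a kernel. A further delicate point is the compatibility of the three distinct ideal quotients, by $\s \cap \T$, by $\U \cap \V$, and by $\U \cap \T$, that appear in $\underline{\h_1}$, $\underline{\h_2}$, and $\underline{\h}$ respectively; reconciling them under condition (1) should rely on the octahedral-type axiom for extriangulated categories together with the rigidity implicit in the twin cotorsion pair structure.
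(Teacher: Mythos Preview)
Your proposal rests on a false premise: the claim that a quasi-abelian category is abelian exactly when it is integral is not correct. Example~\ref{ex3} of the paper is a counterexample in situ: there $\W=\U=\T$, so conditions (2) and (3) hold trivially and (by the Remark after Proposition~\ref{prop2}) the heart $\underline{\h}$ is integral; being a heart it is also quasi-abelian, yet it fails to be abelian precisely because condition (1) fails. So integrality together with quasi-abelianness is strictly weaker than abelianness, and your opening reduction collapses. There is a related error concerning condition (1) in the forward direction: you assert that $\underline{\h}\subseteq\underline{\h_1}\cap\underline{\h_2}$ follows by reinterpreting witness $\EE$-triangles, but the same Example~\ref{ex3} exhibits $\underline{\h}\nsubseteq\underline{\h_1}$, so this inclusion cannot be read off from the definitions alone. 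It genuinely uses abelianness of $\underline{\h}$ and is the substance of \cite[Proposition~4.1]{LYZ}, which the paper simply quotes.

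The paper's argument also differs from yours in mechanism. For the necessity of (2) and (3) (Proposition~\ref{prop2}), one does not argue that the given epimorphism is a cokernel and then ``unpack''; instead one builds from the epi-$\EE$-triangle an auxiliary morphism $\underline{g}\colon C\to B$ in $\underline{\h}$ that is simultaneously monic and epic, hence an isomorphism since $\underline{\h}$ is abelian, and lifts the inverse back to $\B$ to exhibit $U$ as a summand of an object of $\s\oplus\W$. For sufficiency (Proposition~\ref{prop3}), the paper does not show closure of $\underline{\h}$ in $\underline{\h_1}$ under kernels and cokernels; it shows directly that any monic-epic $\underline{f}$ in $\underline{\h}$ remains monic and epic when regarded in the abelian category $\underline{\h_1}$, using condition (2) to decompose the cone $U=S\oplus W$ and observing that any morphism $S\to C$ with $C\in\h_1$ factors through $\s\cap\T$. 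In fact only one of the inclusions $\underline{\h}\subseteq\underline{\h_1}$ or $\underline{\h}\subseteq\underline{\h_2}$, paired with the matching condition (2) or (3), is needed for this half.
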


This paper is organized as follows. In Section 2, we review some elementary concepts and properties of cotorsion pairs on extriangulated categories.
In Section 3, we provide a sufficient and necessary condition for the heart of a twin cotorsion pair
 being integral. In Section 4, we provide a  sufficient and necessary condition for the heart of a twin cotorsion pair being abelian.

\section{Preliminaries}

In this article, let $(\B,\EE,\mathfrak{s})$ be an extriangulated category (we omit the definition and basic properties of extriangulated categories, see \cite[Sections 2, 3]{NP} if needed). When we say that $\C$ is a subcategory of $\B$, we always assume that $\C$ is full and closed under isomorphisms. We first recall the definition of a (twin) cotorsion pair.

\begin{defn}\cite[Definitions 2.1 and 4.12]{NP}
Let $\U$ and $\V$ be two subcategories of $\B$ which are closed under direct summands. We call $(\U,\V)$ a \emph{cotorsion pair} if the following conditions are satisfied:
\begin{itemize}
\item[(a)] $\EE(\U,\V)=0$.

\item[(b)] For any object $B\in \B$, there exist two $\EE$-triangles
\begin{align*}
V_B\rightarrow U_B\rightarrow B{\dashrightarrow},\quad
B\rightarrow V^B\rightarrow U^B{\dashrightarrow}
\end{align*}
satisfying $U_B,U^B\in \U$ and $V_B,V^B\in \V$.
\end{itemize}
A pair of cotorsion pairs $((\s,\T),(\U,\V))$ is called a twin cotorsion pair if $\s\subseteq \U$.
\end{defn}

From now on, let $((\s,\T),(\U,\V))$ be a twin cotorsion pair and $\W=\U\cap \T$. The following remark is helpful.

\begin{rem}
For a twin cotorsion pair $((\s,\T),(\U,\V))$, we have:
\begin{itemize}
\item[(1)] $\V\subseteq \T$ and $\EE(\s,\V)=0$;
\item[(2)] $\s,\T,\U,\V$ are closed under extensions;
\end{itemize}
\end{rem}

We denote by $\X(A,B)$ the subgroup of $\Hom_\B(A,B)$ from $A$ to $B$ which factor through objects lie in a subcategory $\X$. The family of
such subgroups forms an ideal of $\B$. Then we
have a category $\B/\X$ whose objects are the objects of
$\B$ and the set of morphisms from $A$ to $B$ is $\Hom_\B(A,B)/\X(A,B)$. For convenience, for any subcategory $\D$ of $\B$, we denote its image in $\B/\W$ by the canonical quotient functor $\pi:\B\to \B/\W$ by $\underline \D$, denote the image of any morphism $f$ in $\uB$ by $\underline f$.
\vspace{1mm}

\begin{lem}\label{ft}
Let $A\in \h$. It admits two $\EE$-triangles
$$V_A\to W_A\xrightarrow{w} A\dashrightarrow,\quad A\xrightarrow{w'} W^A\to S^A\dashrightarrow$$
where $S^A\in\s$, $V_A\in\V$ and $W_A,W^A\in \W$.
Then
\begin{itemize}
\item[(1)] any morphism from an object $U\in \U$ to $A$ factors through $w$;
\item[(2)] any morphism from $A$ to an object $T\in \T$ factors through $w'$.
\end{itemize}
\end{lem}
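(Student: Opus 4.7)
The plan is to use the long exact sequences obtained by applying the bifunctor $\EE$ to the two given $\EE$-triangles, combined with the defining vanishing conditions $\EE(\U,\V)=0$ and $\EE(\s,\T)=0$ of the two cotorsion pairs. First I would observe that the existence of the two $\EE$-triangles in the statement is immediate from the hypothesis $A\in\h=\B^+\cap\B^-$: Definition \ref{He}(i) supplies exactly the first triangle with $V_A\in\V$ and $W_A\in\W=\U\cap\T$, while (ii) supplies the second with $S^A\in\s$ and $W^A\in\W$. So nothing needs to be constructed; only the factorization properties remain.

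For part (1), I would apply the covariant functor $\Hom_\B(U,-)$ to the $\EE$-triangle $V_A\to W_A\xrightarrow{w}A\dashrightarrow$ and read off from the induced long exact sequence the piece
$$\Hom_\B(U,W_A)\xrightarrow{w_\ast}\Hom_\B(U,A)\longrightarrow\EE(U,V_A).$$
Since $U\in\U$ and $V_A\in\V$, the cotorsion pair axiom for $(\U,\V)$ gives $\EE(U,V_A)=0$, so $w_\ast$ is surjective and every morphism $U\to A$ factors through $w$. Part (2) is entirely dual: applying $\Hom_\B(-,T)$ to $A\xrightarrow{w'}W^A\to S^A\dashrightarrow$ yields the exact piece
$$\Hom_\B(W^A,T)\xrightarrow{(w')^\ast}\Hom_\B(A,T)\longrightarrow\EE(S^A,T),$$
and $\EE(S^A,T)=0$ because $S^A\in\s$, $T\in\T$ and $(\s,\T)$ is a cotorsion pair, so $(w')^\ast$ is surjective.

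I do not expect any real obstacle. Both claims reduce to one application of the standard long exact sequence associated to an $\EE$-triangle (as recorded in \cite[Sections 2, 3]{NP}), and each uses the vanishing from exactly one of the two cotorsion pairs that make up $((\s,\T),(\U,\V))$. The only point requiring a little care is to invoke the correct direction of the long exact sequence (covariant in one case, contravariant in the other) so that the relevant $\EE$-term sits in the position that makes the preceding $\Hom$-map surjective.
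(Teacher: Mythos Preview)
Your proposal is correct and follows exactly the same approach as the paper: apply $\Hom_\B(U,-)$ to the first $\EE$-triangle, use the exact sequence $\Hom_\B(U,W_A)\to\Hom_\B(U,A)\to\EE(U,V_A)=0$ to get the factorization, and then invoke duality for (2). Your additional remark that the two $\EE$-triangles exist by the very definition of $\h=\B^+\cap\B^-$ is a harmless clarification.
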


\begin{proof}
We only show (1), (2) is by dual.

Let $a:U\to A$ be any morphism. Since $\EE(U,V_A)=0$,  from the following exact sequence
$$\Hom_{\B}(U,W_A)\xrightarrow{\Hom_{\B}(U,w)} \Hom_{\B}(U,A)\longrightarrow \EE(U,V_A)=0$$
we can obtain that there exists a morphism $u:U\to W_A$ such that $wu=a$.
\end{proof}

\begin{lem}\cite[Lemmas 2.9, 2.10 and 2.30]{LN}\label{+-}
\begin{itemize}
\item[(1)] In the $\EE$-triangle $T\to A\xrightarrow{f} B\dashrightarrow$ with $T\in \T$, $A\in \B^+$ whenever $B\in \B^+$;
\item[(2)] In the $\EE$-triangle $T\to A\xrightarrow{f} B\dashrightarrow$ with $T\in \T$ and $A,B\in \h$, $\underline f$ is a monomorphism in $\underline \h$;
\item[(3)] In the $\EE$-triangle $X\xrightarrow{x} Y\to U\dashrightarrow$ with $U\in \U$, $Y\in \B^-$ whenever $X\in \B^-$;
\item[(4)] In the $\EE$-triangle $X\xrightarrow{x} Y\to U\dashrightarrow$ with $U\in \U$  and $X,Y\in \h$, $\underline x$ is an epimorphism in $\underline \h$.
\end{itemize}
\end{lem}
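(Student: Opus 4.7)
My plan is to prove parts (1) and (2); parts (3) and (4) follow from these by the evident duality, which interchanges $(\s,\T)\leftrightarrow(\V,\U)$ and $\B^+\leftrightarrow\B^-$ (i.e.\ by passing to the opposite extriangulated category).

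For (1), I start with the $\EE$-triangle $V_B\to W_B\to B\dashrightarrow$ with $V_B\in\V$ and $W_B\in\W$ afforded by $B\in\B^+$. Combining it with $T\to A\xrightarrow{f} B\dashrightarrow$ via the pullback / $3\times 3$ axiom of extriangulated categories (\cite[Proposition 3.15]{NP}), I obtain an object $D$ sitting in two $\EE$-triangles
$$V_B\to D\to A\dashrightarrow,\qquad T\to D\to W_B\dashrightarrow.$$
Since $T,W_B\in\T$ and $\T$ is extension-closed, $D\in\T$. The object $D$ need not be in $\U$, so I take its $(\U,\V)$-approximation $V_D\to U_D\to D\dashrightarrow$ with $V_D\in\V$ and $U_D\in\U$. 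Applying $\EE(S,-)$ for $S\in\s$ to this $\EE$-triangle and using $\EE(S,V_D)=0$ (since $\s\subseteq\U$ and $V_D\in\V$) together with $\EE(S,D)=0$ (since $D\in\T$), I conclude that $\EE(S,U_D)=0$ for every $S\in\s$, whence $U_D\in\T$ and thus $U_D\in\W$. A further application of the octahedral axiom (ET4) to the composable pair $U_D\to D\to A$ produces an $\EE$-triangle $V'\to U_D\to A\dashrightarrow$ where $V'$ sits in an $\EE$-triangle $V_D\to V'\to V_B\dashrightarrow$; as $\V$ is extension-closed, $V'\in\V$, and the first triangle certifies $A\in\B^+$.

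For (2), I must show that for any $X\in\h$ and any $g\colon X\to A$, the equality $\underline f\,\underline g=0$ forces $\underline g=0$. The hypothesis means $fg=vu$ factors through some $W_1\in\W$, with $u\colon X\to W_1$ and $v\colon W_1\to B$. Pulling back $T\to A\xrightarrow{f} B\dashrightarrow$ along $v$ yields an $\EE$-triangle $T\to Q\to W_1\dashrightarrow$ together with a morphism $Q\to A$ fitting in a commutative square; since $T,W_1\in\T$ and $\T$ is extension-closed, $Q\in\T$. The weak pullback property (obtained by comparing the long exact sequences of $\Hom(X,-)$ and $\EE(X,-)$ associated to the two $\EE$-triangles, using that the image of $fg$ in $\EE(X,T)$ is zero) supplies a morphism $h\colon X\to Q$ with $(Q\to A)\circ h=g$. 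Because $X\in\h$ and $Q\in\T$, Lemma~\ref{ft}(2) factors $h$ through $w'_X\colon X\to W^X\in\W$, so $g$ itself factors through $W^X\in\W$, i.e.\ $\underline g=0$. The main technical obstacle is the double use of extriangulated $3\times 3$/octahedral constructions in (1): one must carefully track the intermediate extension object $V'$ as an extension of $V_B$ by $V_D$; once that bookkeeping is done, the remaining arguments are routine long-exact-sequence chases.
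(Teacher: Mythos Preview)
The paper does not give its own proof of this lemma: it is stated with a citation to \cite[Lemmas 2.9, 2.10 and 2.30]{LN} and left unproved. So there is no in-paper argument to compare your attempt against.

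That said, your argument is sound and is essentially the standard one. A couple of small remarks for precision. In (1), the step ``$\EE(S,V_D)=0$ and $\EE(S,D)=0$ imply $\EE(S,U_D)=0$'' uses the long exact sequence segment $\EE(S,V_D)\to\EE(S,U_D)\to\EE(S,D)$ arising from the $\EE$-triangle $V_D\to U_D\to D\dashrightarrow$; you might say this explicitly. The composition of the two deflations $U_D\to D$ and $D\to A$ is handled by (ET4)$^{\mathrm{op}}$ rather than (ET4) itself, which indeed gives the $\EE$-triangle $V_D\to V'\to V_B\dashrightarrow$ you need. In (2), the ``weak pullback property'' you invoke is exactly the exactness of $\Hom_\B(X,Q)\to\Hom_\B(X,A\oplus W_1)\to\Hom_\B(X,B)$ coming from the $\EE$-triangle $Q\to A\oplus W_1\to B\dashrightarrow$ produced by the pullback construction (Lemma~\ref{L4}); stating it that way makes the existence of $h$ immediate without any appeal to $\EE(X,-)$. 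Your duality reduction for (3) and (4) is fine: in $\B^{\mathrm{op}}$ the twin cotorsion pair becomes $((\V,\U),(\T,\s))$, which swaps the roles of $\T$ and $\U$ and of $\B^+$ and $\B^-$ as you claim.
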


\begin{lem}\cite[Proposition 1.20]{LN}\label{L4}
Let $A\xrightarrow{x} B\xrightarrow{y} C\dashrightarrow$ be any $\EE$-triangle. Let $f:A\to D$ be any morphism. Then we can get a commutative diagram of $\EE$-triangles
$$\xymatrix{
A\ar[r]^x \ar[d]_{f} &B\ar[r]^y \ar[d]^{g} &C\ar@{-->}[r] \ar@{=}[d]&\\
D \ar[r]_d &E\ar[r]_e &C\ar@{-->}[r] &
}
$$
and an $\EE$-triangle $A \xrightarrow{\svecv{f}{x}} D\oplus B \xrightarrow{\svech{-d}{g}} E \dashrightarrow$. On the other hand, if we already have a commutative diagram of $\EE$-triangles
$$\xymatrix{
A\ar[r]^x \ar[d]_{f} &B\ar[r]^y \ar[d]^{g} &C\ar@{-->}[r] \ar@{=}[d]&\\
D \ar[r]_d &E\ar[r]_e &C\ar@{-->}[r] &
}
$$
then there exists a morphism $g':B\to E$ such that when we replace $g$ by $g'$, the diagram is still commutative and $A \xrightarrow{\svecv{f}{x}} D\oplus B \xrightarrow{\svech{-d}{g'}} E \dashrightarrow$ is an $\EE$-triangle.
\end{lem}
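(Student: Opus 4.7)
The lemma splits into two parts. For the first part, I start from the $\EE$-triangle $A \xrightarrow{x} B \xrightarrow{y} C \dashrightarrow$ realizing some $\delta \in \EE(C, A)$ and the given morphism $f: A \to D$. The plan is to form the pushforward $f_*\delta \in \EE(C, D)$ and let $D \xrightarrow{d} E \xrightarrow{e} C \dashrightarrow$ be any realization. Axiom (ET3) of the extriangulated structure then guarantees that the pair $(f, 1_C)$ lifts to a morphism of $\EE$-triangles, yielding the required morphism $g: B \to E$ and the commutative square. For the auxiliary $\EE$-triangle $A \xrightarrow{\svecv{f}{x}} D \oplus B \xrightarrow{\svech{-d}{g}} E \dashrightarrow$, I would invoke the ``pushout is cocartesian'' principle of extriangulated categories: whenever a commutative square arises from a morphism of $\EE$-triangles of the stated form, the associated mapping-cone sequence is itself an $\EE$-triangle. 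This is a standard consequence of (ET4) and its dual, typically proved by decomposing $\EE$-triangles along common sides and assembling the pieces via an octahedral-style argument.

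For the second part, given a commutative diagram of $\EE$-triangles with some $g: B \to E$, the key observation is that the morphism-of-triangles data $(f, g, 1_C)$ forces the bottom row to realize exactly $f_*\delta$. Thus both $E$ (from the given diagram) and the $E^*$ produced by Part~1 realize the same $\EE$-class $f_*\delta$. By the standard uniqueness-of-realization result---a short consequence of (ET3) plus a five-lemma argument adapted to $\EE$-triangles with identities on the outer terms---there exists an isomorphism $\phi: E^* \xrightarrow{\sim} E$ with $\phi d^* = d$ and $e \phi = e^*$. Setting $g' := \phi \circ g^*$, where $g^*: B \to E^*$ is the morphism supplied by Part~1, one checks directly that $g' x = \phi d^* f = d f$ and $e g' = e^* g^* = y$, so the square still commutes. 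The $\EE$-triangle $A \xrightarrow{\svecv{f}{x}} D \oplus B \xrightarrow{\svech{-d^*}{g^*}} E^* \dashrightarrow$ from Part~1, transported by the isomorphism of $\EE$-triangles $(1_A, 1_{D\oplus B}, \phi)$, becomes the desired $A \xrightarrow{\svecv{f}{x}} D \oplus B \xrightarrow{\svech{-d}{g'}} E \dashrightarrow$.

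The main technical obstacle is the ``pushout is cocartesian'' claim inside Part~1, namely that $A \xrightarrow{\svecv{f}{x}} D \oplus B \xrightarrow{\svech{-d}{g}} E \dashrightarrow$ is genuinely an $\EE$-triangle rather than merely a $3$-term complex whose composition is zero. Once this structural fact is in place, Part~2 reduces to a clean transport argument via the isomorphism $\phi$, and the remaining compatibilities (commutativity of the square with $g'$, and the vanishing $(-d)f + g'x = 0$) are purely formal verifications.
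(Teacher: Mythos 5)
The first thing to note is that the paper offers no proof of this lemma at all: it is quoted verbatim from \cite[Proposition 1.20]{LN}, so your attempt can only be measured against the proof recorded there (which goes back to the (ET4)-arguments of \cite{NP}). Your architecture is reasonable, but each half has a genuine gap. In Part 1, after realizing $f_*\delta$ and lifting $(f,1_C)$ to a morphism of $\EE$-triangles (which, minor point, is the defining property of the realization $\mathfrak{s}$ rather than an application of (ET3)), you defer precisely the nontrivial content --- that $A \xrightarrow{\svecv{f}{x}} D\oplus B \xrightarrow{\svech{-d}{g}} E \dashrightarrow$ is an $\EE$-triangle --- to an unproved ``pushout is cocartesian'' principle. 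That claim \emph{is} the proposition, not a quotable lemma: the actual argument factors $\svecv{f}{x}=(1_D\oplus x)\circ\svecv{f}{1}$, applies (ET4) to the split $\EE$-triangle $A\to D\oplus A\to D$ and the direct-sum $\EE$-triangle $D\oplus A\xrightarrow{1\oplus x} D\oplus B\to C$, and identifies the class of the induced $\EE$-triangle $D\to E\to C$ as $f_*\delta$ from the compatibilities in (ET4). Without this, Part 1 restates the statement.

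In Part 2, your ``key observation'' --- that the degreewise-commutative data $(f,g,1_C)$ \emph{forces} the bottom row to realize $f_*\delta$ --- is false. Already in a triangulated category: take the top $\EE$-triangle $C[-1]\to 0\to C$, whose class $\delta\in\EE(C,C[-1])=\Hom_\B(C,C)$ is an isomorphism, any non-split bottom $\EE$-triangle $D\xrightarrow{d}E\xrightarrow{e}C\dashrightarrow$ with class $\delta'\neq 0$, and vertical maps $f=0$, $g=0$, $1_C$. Both squares commute, yet $f_*\delta=0\neq\delta'$; in general the two squares only force $\delta'=(f+h)_*\delta$ for some $h\colon A\to D$. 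Note that the conclusion of Part 2 also fails for this diagram (here $B=0$, so the cone sequence $C[-1]\xrightarrow{0}D\xrightarrow{-d}E\dashrightarrow$ would force $d$ to be a split monomorphism), which shows the hypothesis ``commutative diagram of $\EE$-triangles'' must be read in the convention of \cite{NP,LN}, where commutativity includes the dashed arrows, i.e.\ $f_*\delta=\delta'$ is part of the given data. Under that reading your claim is the hypothesis rather than an observation, and the rest of your Part 2 --- uniqueness of realizations up to equivalence, the isomorphism $\phi\colon E^*\to E$, and $g':=\phi g^*$ --- is correct and is essentially how the cited proof concludes.
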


In this section, we show several lemmas related to the hearts of twin cotorsion pairs.

\begin{lem}\label{int}
$\h\cap \U=\W=\h\cap \T$.
\end{lem}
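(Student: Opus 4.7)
The inclusion $\W\subseteq \h\cap\U$ and $\W\subseteq \h\cap\T$ is essentially formal: any $W\in\W$ sits in the split $\EE$-triangles $0\to W\xrightarrow{\id} W\dashrightarrow$ (with $0\in\V$ and $0\in\s$), so $W\in\B^+\cap\B^-=\h$, and $W\in\U\cap\T$ by definition. The real content is the reverse inclusions, and both follow by the same splitting argument, so I would only write one in detail.

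For $\h\cap\U\subseteq\W$, take $X\in\h\cap\U$. Since $X\in\B^+$ there is an $\EE$-triangle
\[
V\longrightarrow W\longrightarrow X\dashrightarrow
\]
with $V\in\V$ and $W\in\W=\U\cap\T$. Because $(\U,\V)$ is a cotorsion pair and $X\in\U$, $V\in\V$, we have $\EE(X,V)=0$, so this $\EE$-triangle splits and $W\cong X\oplus V$. Since $\U$ and $\T$ are closed under direct summands (they belong to cotorsion pairs), $X$ inherits membership in $\T$ from $W\in\T$, and $X\in\U$ by assumption. Hence $X\in\U\cap\T=\W$.

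For $\h\cap\T\subseteq\W$, dually take $Y\in\h\cap\T$. From $Y\in\B^-$ one has an $\EE$-triangle $Y\to W'\to S\dashrightarrow$ with $S\in\s$ and $W'\in\W$. Since $(\s,\T)$ is a cotorsion pair and $S\in\s\subseteq\U$, $Y\in\T$, we get $\EE(S,Y)=0$ so the triangle splits, $W'\cong Y\oplus S$, and $Y$ becomes a summand of $W'\in\W$, forcing $Y\in\W$.

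There is no real obstacle here: the entire argument rests on the vanishing $\EE(\U,\V)=0$ (respectively $\EE(\s,\T)=0$) forcing the defining $\EE$-triangles of $\B^+$ and $\B^-$ to split once one end already lies in the ``orthogonal'' class, together with the fact that the four subcategories in a twin cotorsion pair are closed under direct summands. The only thing one has to be slightly careful about is to check that the summand indeed lies in the intersection $\U\cap\T$, which is immediate from closure under summands.
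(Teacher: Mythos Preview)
Your proof is correct and follows essentially the same route as the paper's: the easy inclusion $\W\subseteq\h$ is immediate from the definition, and the reverse inclusions come from the splitting of the defining $\EE$-triangle of $\B^+$ (resp.\ $\B^-$) via $\EE(\U,\V)=0$ (resp.\ $\EE(\s,\T)=0$), together with closure under direct summands. The only cosmetic difference is that the paper phrases the conclusion as ``$\W$ is closed under direct summands'' while you unpack this as $\T$ and $\U$ being separately closed under summands; the content is identical.
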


\proof By Definition \ref{He}, we have $\W\subseteq\B^{+}$ and $\W\subseteq\B^{-}$,
 which implies that $\W\subseteq\B^{+}\cap\B^{-}=\h$. Hence $\W\subseteq \h\cap \U$.
\vspace{0.5mm}

For any object $X\in\h\cap\U$, there exists an $\EE$-triangle
$V\to W\to X\dashrightarrow $ where $V\in \V \text{ and }W\in \W$.
Since $\EE(\U,\V)=0$, we have $\EE(X,V)=0$.
Hence this $\EE$-triangle splits and $W\cong X\oplus V$. Since $\W$ is closed under direct summands,
we have $X\in\W$. Thus we obtain $\h\cap\U\subseteq \W$.
Dually, we can show that $\W=\h\cap \T$.
\qed
\vspace{2mm}

\begin{defn}
Let $f:A\to B$ be any morphism in $\B$.
\begin{itemize}
\item[(a)] $f$ is called $\W$-monic if $\Hom_\B(B,W)\xrightarrow{\Hom_\B(f,W)}\Hom_\B(A,W)$ is surjective for any $W\in \W$;
\item[(b)] $f$ is called $\W$-epic if $\Hom_\B(W,A)\xrightarrow{\Hom_\B(W,f)}\Hom_\B(W,B)$ is surjective for any $W\in \W$;
\end{itemize}
\end{defn}

\begin{rem}
In Lemma \ref{ft}, $w$ is $\W$-monic and $w'$ is $\W$-epic.
\end{rem}

\begin{lem}\label{L2}
Let $A\xrightarrow{x} B\xrightarrow{y} C\dashrightarrow$ be an $\EE$-triangle such that $A,C\in\B^-$ and $x$ is $\W$-monic. Then $B\in \B^-$.
\end{lem}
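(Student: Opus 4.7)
The plan is to construct an $\EE$-triangle $B\to F\to G\dashrightarrow$ with $F\in\W$ and $G\in\s$, which will certify that $B\in\B^-$. The construction proceeds in three moves: a pushout via Lemma \ref{L4}, followed by one application of the dual octahedral axiom and one application of the ordinary octahedral axiom.

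Since $A,C\in\B^-$, fix $\EE$-triangles $A\xrightarrow{a}W_A\to S_A\dashrightarrow$ and $C\xrightarrow{c}W_C\to S_C\dashrightarrow$ with $W_A,W_C\in\W$ and $S_A,S_C\in\s$. The $\W$-monicity of $x$ says exactly that $a$ factors through $x$, which is what makes the pushout well-behaved. Applying Lemma \ref{L4} to $A\xrightarrow{x}B\xrightarrow{y}C\dashrightarrow$ along the morphism $a:A\to W_A$ yields an object $E$, an $\EE$-triangle $W_A\xrightarrow{d}E\xrightarrow{e}C\dashrightarrow$, and the combined $\EE$-triangle $A\to W_A\oplus B\to E\dashrightarrow$. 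By the symmetry of this combined $\EE$-triangle in $a$ and $x$ (equivalently, by applying Lemma \ref{L4} a second time to $A\xrightarrow{a}W_A\to S_A\dashrightarrow$ along $x:A\to B$ and comparing the resulting middle objects via the swap isomorphism of $W_A\oplus B$), one extracts the companion $\EE$-triangle $B\xrightarrow{g}E\to S_A\dashrightarrow$.

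Now apply the dual octahedral axiom (ET4)$^{\mathrm{op}}$ to the pair $W_A\xrightarrow{d}E\xrightarrow{e}C\dashrightarrow$ and $C\xrightarrow{c}W_C\to S_C\dashrightarrow$: this produces an object $F$ together with $\EE$-triangles $W_A\to F\to W_C\dashrightarrow$ and $E\to F\to S_C\dashrightarrow$. Since $\U$ and $\T$ are closed under extensions, so is $\W=\U\cap\T$, and hence $F\in\W$. Finally, apply the ordinary octahedral axiom to the $\EE$-triangles $B\xrightarrow{g}E\to S_A\dashrightarrow$ and $E\to F\to S_C\dashrightarrow$ (which share $E$ in the required position): this yields an object $G$ with $\EE$-triangles $B\to F\to G\dashrightarrow$ and $S_A\to G\to S_C\dashrightarrow$. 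Since $\s$ is extension-closed, $G\in\s$, and the $\EE$-triangle $B\to F\to G\dashrightarrow$ witnesses $B\in\B^-$.

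The only delicate point is the extraction of the companion $\EE$-triangle $B\to E\to S_A\dashrightarrow$ at the end of the second paragraph: morally this is the statement that in a pushout the cofibres of opposite sides coincide, but in an extriangulated category it must be derived formally, either from two symmetric applications of Lemma \ref{L4} as indicated, or via a $3\times 3$-type lemma from \cite{NP}. After this, the two octahedral applications together with the extension-closedness of $\W$ and $\s$ finish the argument without further subtlety.
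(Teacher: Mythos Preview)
Your step 2 contains a genuine gap. Neither (ET4) nor (ET4)$^{\op}$ applies to the pair $W_A\to E\to C\dashrightarrow$ and $C\to W_C\to S_C\dashrightarrow$: those axioms handle two $\EE$-triangles in which the shared object is the \emph{middle} term of one and an outer term of the other, whereas here $C$ sits in the third position of the first triangle and the first position of the second. Concretely, producing your $F$ with $E\to F\to S_C\dashrightarrow$ amounts to lifting the extension class $\delta_2\in\EE(S_C,C)$ along $e_*\colon\EE(S_C,E)\to\EE(S_C,C)$, and in an extriangulated category the long exact sequence stops at $\EE(S_C,C)$, so no such lift is guaranteed.

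The repair is exactly the observation you record in your second paragraph but then never exploit: since $x$ is $\W$-monic, $a$ factors through $x$, and therefore the pushout extension $W_A\to E\to C\dashrightarrow$ is \emph{split}, i.e.\ $E\cong W_A\oplus C$. One may then take $F=W_A\oplus W_C$; the triangle $W_A\to F\to W_C\dashrightarrow$ is split and $E\to F\to S_C\dashrightarrow$ is the direct sum of the identity on $W_A$ with $C\to W_C\to S_C\dashrightarrow$. With this your step 3 goes through unchanged. This is precisely the paper's route: it notes the splitting, writes the diagram with middle row $W\to W\oplus C\to C$ and second column $B\to W\oplus C\to S\dashrightarrow$, and then invokes Lemma \ref{+-} to conclude $B\in\B^-$. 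Your (corrected) steps 2--3 amount to re-deriving that closure property of $\B^-$ by hand. Incidentally, the point you flag as ``delicate'' --- the companion triangle $B\to E\to S_A\dashrightarrow$ --- is the routine part; the genuinely delicate step is the one you glossed over.
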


\begin{proof}
Since $A\in \B^-$, it admits an $\EE$-triangle $A \xrightarrow{w} W\to S \dashrightarrow$ with $W\in \W$ and $S\in \s$. Then $w$ factors through $x$ since $x$ is $\W$-monic. Hence we have the following commutative diagram
$$\xymatrix{
A \ar[r]^x \ar[d]_w &B\ar[r]^y \ar[d] &C \ar@{=}[d] \ar@{-->}[r] &\\
W \ar[r] \ar[d] &W\oplus C \ar[r] \ar[d] &C \ar@{-->}[r] &\\
S \ar@{-->}[d] \ar@{=}[r] &S \ar@{-->}[d]\\
&&
}
$$
of $\EE$-triangles.
By Lemma \ref{+-}, the second column of the above diagram implies that $B\in \B^-$.
\end{proof}

Let $f\colon A\to B$ be a morphism in $\h$. Since $A$ admits an $\EE$-triangle $A \xrightarrow{w} W\to S \dashrightarrow$ with $W\in \W$ and $S\in \s$, we have the following commutative diagram
$$\xymatrix{
A \ar[r]^w \ar[d]_f &W \ar[r] \ar[d] &S \ar@{=}[d] \ar@{-->}[r] &\\
B \ar[r] &C_f \ar[r] &S \ar@{-->}[r] &
}
$$
which induces an $\EE$-triangle $A\xrightarrow{\svecv{f}{w}} B\oplus W\to C_f\dashrightarrow$. Since $w$ is $\W$-monic, $\svecv{f}{w}$ is also $\W$-monic.

By \cite[Corollary 2.26]{LN}, we know that $\underline f$ is an epimorphism in $\underline \h$ if and only if $C_f\in \U$. We give the following definition.

\begin{defn}\label{epi}
Denote by ${_{\rm epi.}}\U$ the subcategory consisting of objects $U\in \U$ admitting an $\EE$-triangle
$$A\xrightarrow{~f~} B\longrightarrow U\dashrightarrow$$
which satisfies the following conditions:
\begin{itemize}
\item[(1)] $A,B\in \h$;
\item[(2)] $f$ is $\W$-monic.
\end{itemize}
Such $\EE$-triangle is called an epi-$\EE$-triangle (since $\underline f$ is an epimorphism in $\underline \h$ by {\rm Lemma \ref{+-}}). Dually we can define $\T_{\rm mono.}$ and mono-$\EE$-triangles.
\end{defn}

%
%

The following lemma is closely related to Definition \ref{epi}.

\begin{lem}\label{L3}
Let $A\xrightarrow{~f~} B\longrightarrow C\dashrightarrow$ be an $\EE$-triangle such that
\begin{itemize}
\item[(1)] $A,B\in \h$;
\item[(2)] $f$ is $\W$-monic;
\item[(3)] $\underline f$ is an epimorphism in $\underline \h$.
\end{itemize}
Then $C\in \U$, which means this $\EE$-triangle is an epi-$\EE$-triangle.
\end{lem}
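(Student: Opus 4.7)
The aim is to show $C\in\U$. By the cotorsion-pair axioms, $C$ fits in an $\EE$-triangle $V_C\to U_C\to C\dashrightarrow$ with $V_C\in\V$ and $U_C\in\U$; if I can establish $\EE(C,V)=0$ for every $V\in\V$, then in particular this $\EE$-triangle splits, so $C$ becomes a direct summand of $U_C\in\U$, and the direct-summand closure of $\U$ forces $C\in\U$. Thus the entire task reduces to proving $\EE(C,V)=0$ for all $V\in\V$.

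To access $C$ I would compare the given $\EE$-triangle with the canonical construction introduced just before Definition~\ref{epi}. Using $A\in\B^-$, pick an $\EE$-triangle $A\xrightarrow{w} W\to S\dashrightarrow$ with $W\in\W$, $S\in\s$, and apply Lemma~\ref{L4} to $A\xrightarrow{f} B\to C\dashrightarrow$ with the morphism $w\colon A\to W$. This yields a commutative diagram whose lower row is an $\EE$-triangle $W\xrightarrow{d} E\to C\dashrightarrow$, together with an auxiliary $\EE$-triangle $A\xrightarrow{\svecv{w}{f}} W\oplus B\to E\dashrightarrow$. Permuting summands, this auxiliary $\EE$-triangle is exactly the one defining $C_f$, so $E\cong C_f$. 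Hypothesis~(3) together with \cite[Corollary~2.26]{LN} then gives $E\in\U$, hence $\EE(E,V)=0$ for every $V\in\V$.

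The decisive step is to show that $d^{*}\colon \Hom(E,V)\to \Hom(W,V)$ is surjective for each $V\in\V$. I would prove this by exploiting the $\W$-monicity of $f$ (hypothesis~(2)), which yields $w''\colon B\to W$ with $w=w''f$. Given any $h\colon W\to V$, the pair $(h,-hw'')\in \Hom(W\oplus B,V)$ lies in the kernel of the map induced by $\svecv{w}{f}$, because $hw-(hw'')f=hw-hw=0$. Applying $\Hom(-,V)$ to the auxiliary $\EE$-triangle and using $\EE(E,V)=0$, one lifts this kernel element to some $c\colon E\to V$, whose restriction along $d$ recovers $h$ (up to the sign convention of Lemma~\ref{L4}).

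Feeding this surjectivity into the long exact sequence
$$\Hom(E,V)\xrightarrow{d^{*}}\Hom(W,V)\to \EE(C,V)\to \EE(E,V)=0$$
attached to $W\xrightarrow{d} E\to C\dashrightarrow$ forces $\EE(C,V)=0$, and the splitting argument from the first paragraph concludes the proof. The main obstacle is the surjectivity of $d^{*}$; this is precisely where hypotheses~(2) and~(3) combine, the former making the lift possible and the latter ensuring $\EE(E,V)=0$.
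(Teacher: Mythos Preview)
Your proof is correct, and the overall outline matches the paper's: both arguments start by passing to the auxiliary object $C_f$ (your $E$), invoke \cite[Corollary 2.26]{LN} via hypothesis~(3) to get $C_f\in\U$, and finish by splitting the $\EE$-triangle $V_C\to U_C\to C\dashrightarrow$. The middle step, however, is genuinely different. The paper uses the $\W$-monicity of $f$ to produce $w'\colon B\to W$ with $w'f=w$, builds an explicit three-row diagram
\[
\xymatrix{
A \ar[r]^f \ar@{=}[d] & B \ar[r] \ar[d]^{\svecv{1}{w'}} & C \ar[d]^g \ar@{-->}[r] &\\
A \ar[r] \ar@{=}[d] & B\oplus W \ar[r] \ar[d]^{(1~0)} & C_f \ar[d]^h \ar@{-->}[r] &\\
A \ar[r]^f & B \ar[r] & C \ar@{-->}[r] &
}
\]
and applies \cite[Corollary 3.6]{NP} to conclude that $hg$ is an isomorphism, so $C$ is a retract of $C_f$; then the retraction $C_f\to C$ is factored through $U_C$ (using $\EE(C_f,V_C)=0$) to split $(\maltese)$. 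You instead keep the $\EE$-triangle $W\xrightarrow{d}E\to C\dashrightarrow$ and prove the stronger intermediate statement $\EE(C,V)=0$ for \emph{every} $V\in\V$, via the long exact sequence and the surjectivity of $d^{*}$. The paper's retract argument is more object-level and slightly shorter; your homological argument yields a bit more and avoids the three-row diagram.

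One small point worth making explicit: your sentence ``permuting summands, this auxiliary $\EE$-triangle is exactly the one defining $C_f$'' is correct in conclusion but terse in justification. The two auxiliary $\EE$-triangles share the same inflation $A\to W\oplus B$ (up to the swap isomorphism), and in an extriangulated category two $\EE$-triangles with the same inflation have isomorphic third terms---apply (ET3) to the pair $(1_A,1_{W\oplus B})$ and then \cite[Corollary 3.6]{NP}. With that remark your argument is complete.
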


\begin{proof}
$f\colon A\to B$ admits an $\EE$-triangle
$$A\xrightarrow{\svecv{f}{w}} B\oplus W\longrightarrow C_f\dashrightarrow$$
where $W\in\W$ and $C_f\in \U$. Since $f$ is $\W$-monic, there exists
a morphism $w'\colon B\to W$ such that $w'f=w$.
Thus we have the following commutative diagram.
$$\xymatrix{
A \ar[r]^f \ar@{=}[d] &B \ar[r] \ar[d]^{\svecv{1}{w'}} &C \ar[d]^g \ar@{-->}[r] &\\
A\ar[r]^{\svecv{f}{w}\quad} \ar@{=}[d] &B\oplus W \ar[r] \ar[d]^{(1~0)} &C_f \ar[d]^h \ar@{-->}[r] &\\
A \ar[r]^f&B \ar[r] &C\ar@{-->}[r] &
}
$$
By \cite[Corollary 3.6]{NP},
we have that $hg$ is an isomorphism.  There exists an automorphism $h'\in \End_{\B}(C)$ such that $h'hg=1$. Since $C$ admits an $\EE$-triangle
$$V_C\to U_C\to C\dashrightarrow\hspace{1cm}(\maltese)$$
with $V_C\in \V$ and $U_C\in \U$, by Lemma \ref{ft}, we have the following commutative diagram
$$\xymatrix{
&C \ar[r]^{g} &C_f \ar[d]^{h'h} \ar@{.>}[dl]\\
V_C \ar[r] &U_C \ar[r] &C \ar@{-->}[r] &
}
$$
which implies that $(\maltese)$ splits. Hence $U_C\cong V_C\oplus C$ and $C\in \U$.
\end{proof}

\subsection{kernels in $\underline \h$}

Let $f:A\to B$ be any morphism in $\h$. Since $B$ admits an $\EE$-triangle $V_B\to W_B\xrightarrow{w} B\dashrightarrow$ with $V_B\in \V$ and $W_B\in \W$, we can obtain the following commutative diagram
$$\xymatrix{
V_B \ar[r]^v \ar@{=}[d] &C \ar[r]^g \ar[d]^{w'} &A \ar[d]^f \ar@{-->}[r] &\\
V_B \ar[r] &W_B \ar[r]_w &B \ar@{-->}[r] &
}
$$
which induces an $\EE$-triangle $C \xrightarrow{\svecv{g}{-w'}} A\oplus W_B \xrightarrow{\svech{f}{w}} B\dashrightarrow$. By Lemma \ref{+-}, $C\in \B^+$. We have the following lemma.

\begin{lem}\label{unique}
Let $d:D\to A$ be any morphism in $\h$. If $\underline {fd}=0$,  then there exists a morphism $c:D\to C$ such that $d=gc$. Moreover, $\underline c$ is unique in $\underline \h$.
\end{lem}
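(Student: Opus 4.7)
The plan is to use the $\EE$-triangle $C \xrightarrow{\svecv{g}{-w'}} A\oplus W_B \xrightarrow{\svech{f}{w}} B \dashrightarrow$ constructed just before the lemma to produce $c$ via its weak kernel property, and then to use the top row $V_B \xrightarrow{v} C \xrightarrow{g} A \dashrightarrow$ of the defining $3\times 3$ diagram (which is itself an $\EE$-triangle) to control uniqueness. Both parts fit the same ``factor through a weak kernel'' pattern, with the required factorizations forced to land inside the data of these $\EE$-triangles by Lemma~\ref{ft}.

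For existence I would first unpack $\underline{fd}=0$ into an honest factorization $fd=\beta\alpha$ with $\alpha:D\to W$, $\beta:W\to B$ and $W\in\W$. The subtle point is that the weak kernel property of $\svech{f}{w}$ requires a witness $\delta:D\to W_B$ with $fd=w\delta$, not merely a factorization of $fd$ through an arbitrary object of $\W$. This upgrade is supplied by Lemma~\ref{ft}(1): since $W\in\W\subseteq\U$ and $B\in\h$, I can lift $\beta$ to $w\gamma$ for some $\gamma:W\to W_B$, and then $\delta:=\gamma\alpha$ satisfies $\svech{f}{w}\svecv{d}{-\delta}=fd-w\delta=0$. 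Feeding this into the $\EE$-triangle produces $c:D\to C$ with $\svecv{g}{-w'}c=\svecv{d}{-\delta}$, and reading off the first coordinate yields $gc=d$ on the nose.

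For the uniqueness of $\underline c$, suppose $c_1,c_2:D\to C$ both satisfy $gc_i=d$; then $g(c_1-c_2)=0$, and the weak kernel property applied to the top $\EE$-triangle $V_B\xrightarrow{v} C\xrightarrow{g} A\dashrightarrow$ yields $\mu:D\to V_B$ with $c_1-c_2=v\mu$. Now $V_B\in\V\subseteq\T$ and $D\in\h\subseteq\B^-$, so $D$ admits its own $\EE$-triangle $D\xrightarrow{w'_D} W^D\to S^D\dashrightarrow$ with $W^D\in\W$, and Lemma~\ref{ft}(2) factors $\mu$ through $w'_D$. Hence $c_1-c_2$ factors through $W^D\in\W$, i.e.\ $\underline{c_1}=\underline{c_2}$ in $\underline\h=\h/\W$. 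The only genuinely nontrivial move in the whole argument is the one singled out in the existence step: promoting a bare factorization of $fd$ through $\W$ to one going through the specific $\W$-monic $w:W_B\to B$ appearing in the constructed $\EE$-triangle. This is precisely what Lemma~\ref{ft}(1) delivers, and it is the step I expect to be the main obstacle.
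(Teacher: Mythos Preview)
Your existence argument is essentially identical to the paper's: both use Lemma~\ref{ft}(1) to upgrade the factorization of $fd$ through an arbitrary $W\in\W$ to one through the specific map $w:W_B\to B$, and then invoke the weak kernel property of the $\EE$-triangle $C\to A\oplus W_B\to B\dashrightarrow$.

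For uniqueness, however, you prove a weaker statement than the paper does. You assume $gc_1=d=gc_2$ on the nose, so $g(c_1-c_2)=0$ and the rest is immediate. The paper instead assumes only $\underline{gc'}=\underline d$, i.e.\ $g(c-c')$ factors through some $W'\in\W$, and this stronger form is what is actually used in the subsequent paragraph to conclude that $\underline{gc^-}$ is a kernel in $\underline\h$ (there one only knows equality of composites in $\underline\h$, not in $\B$). The extra step the paper supplies is: write $g(c-c')=w_2'w_1'$ with $W'\in\W$, use $\EE(W',V_B)=0$ to lift $w_2'$ through $g$ as $w_2'=gw_3'$, and then apply your argument to $(c-c')-w_3'w_1'$, which now genuinely lies in the kernel of $g$. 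Your proof becomes complete for the intended statement once this short lifting step is inserted.
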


\begin{proof}
Since $\underline {fd}=0$, we have the following commutative diagram
$$\xymatrix{
D \ar[r]^{w_1} \ar[d]_-{\svecv{d}{0}} &W \ar[d]^{w_2}\\
A\oplus W_B \ar[r]_-{\svech{f}{w}} &B
}
$$
with $W\in \W$. There exists a morphism $w_3:W\to W_B$ such that $ww_3=w_2$. Hence $\svech{f}{w} \circ \svecv{d}{-w_3w_1}=0$ and there exists a morphism $c:D\to C$ such that $\svecv{d}{-w_3w_1}=\svecv{g}{-w'}\circ c$.  Then $d=gc$. If there exists another morphism $c':D\to C$ such that $\underline d=\underline {gc'}$, we have that $g(c-c')$ factors through an object $W'\in \W$. Then we have the following commutative diagram
$$\xymatrix{
D \ar[r]^{w_1'} \ar[d]_-{c-c'} &W' \ar[d]^{w_2'}\\
C \ar[r]_g &A
}
$$
Since $\EE(W',V_B)=0$, there is a morphism $w_3':W'\to C$ such that $w_2'=gw_3'$. Hence $g((c-c')-w_3'w_1')=0$ and there is a morphism $v':D\to V_B$ such that $vv'=(c-c')-w_3'w_1'$. By Lemma \ref{ft}, $v'$ factors through $\W$, hence $\underline c=\underline {c'}$.
\end{proof}

Now we can find the kernel of $\underline f$ in $\underline{\h}$. Note that $C$ admits a commutative diagram
$$\xymatrix{
V_1 \ar@{=}[r] \ar[d] &V_1 \ar[d]\\
C^- \ar[r] \ar[d]_{c^-} &W_1 \ar[r] \ar[d] &S_1 \ar@{-->}[r] \ar@{=}[d] &\\
C \ar[r] \ar@{-->}[d] &T_1 \ar[r] \ar@{-->}[d] &S_1 \ar@{-->}[r] &\\
&&
}
$$
with $T_1\in \T$, $S_1\in \s$, $V_1\in \V$, $W_1\in \W$ and $C^-\in \h$. By \cite[Proposition 2.19]{LN} and the dual of \cite[Proposition 2.15]{LN}, for any morphism $h:D_1\to C$ in $\h$, there exists a unique morphism $\underline {c_1}:D_1\to C^-$ such that $\underline {c^-c_1}=\underline h$. Now by Lemma \ref{unique}, $\underline {gc^-}$ is the kernel of $\underline f$ in $\underline \h$.

\section{Integral Hearts}

In this section, we provide a sufficient and necessary condition for the hearts being integral.

Let $\X$ and $\Y$ be two full subcategories of $\B$.
We denote by $\X\ast\Y$ the subcategory consisted by the objects $Z$
such that there exists an $\EE$-triangle
$X\to Z \to Y\dashrightarrow $ with $X\in\X$ and $Y\in\Y$.

\begin{thm}\label{main1}
$\underline \h$ is integral if and only if $\B^-\cap (\T*{_{\rm epi.}}\U)\subseteq \U$.
\end{thm}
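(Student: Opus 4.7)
The strategy is to prove the two implications separately, converting statements about pullbacks of epimorphisms in $\underline\h$ into $\E$-triangle conditions via the lemmas of Section~2.

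\emph{Necessity.} Suppose $\underline\h$ is integral and take $Z\in\B^-\cap(\T\ast{_{\rm epi.}}\U)$. Unfolding the hypothesis, one has an $\E$-triangle $T\to Z\to U\dashrightarrow$ with $T\in\T$, and since $U\in{_{\rm epi.}}\U$ an epi-$\E$-triangle $A\xrightarrow{f}B\to U\dashrightarrow$ with $A,B\in\h$ and $f$ being $\W$-monic. I will apply the dual of Lemma~\ref{L4} to form a bicartesian square on the two morphisms into $U$, producing an object $X$ together with $\E$-triangles $T\to X\to B\dashrightarrow$ and $A\to X\to Z\dashrightarrow$. Lemma~\ref{+-}(1) gives $X\in\B^+$; and since the composition $A\to X\to B$ equals $f$, the morphism $A\to X$ inherits $\W$-monicity from $f$, so Lemma~\ref{L2} applied to the second triangle gives $X\in\B^-$. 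Hence $X\in\h$. In $\underline\h$, $\underline{X\to B}$ is a monomorphism by Lemma~\ref{+-}(2) and $\underline f$ is an epimorphism by Lemma~\ref{+-}(4); a direct check using monicity of $\underline{X\to B}$ shows that
\[
\xymatrix@C=3em{A\ar[r]^-{\underline{A\to X}}\ar[d]_{\id}&X\ar[d]^{\underline{X\to B}}\\ A\ar[r]_-{\underline f}&B}
\]
is a pullback in $\underline\h$. Left integrality then forces $\underline{A\to X}$ to be an epimorphism, and Lemma~\ref{L3} applied to the $\E$-triangle $A\to X\to Z\dashrightarrow$ yields $Z\in\U$.

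\emph{Sufficiency.} Suppose $\B^-\cap(\T\ast{_{\rm epi.}}\U)\subseteq\U$ and consider a pullback square in $\underline\h$ with bottom edge an epimorphism $\underline d\colon A\to B$ and top edge $\underline a\colon P\to D$. I realize the cokernel $C_a$ of $\underline a$ in $\underline\h$ via an $\E$-triangle $P\xrightarrow{\svecv{a}{w_P}}D\oplus W_P\to C_a\dashrightarrow$ coming from Lemma~\ref{L4}, and analogously the cokernel $C_d\in{_{\rm epi.}}\U$ of $\underline d$. Lemma~\ref{+-}(3) applied to the $\E$-triangle $D\to C_a\to S_P\dashrightarrow$ (with $D\in\B^-$ and $S_P\in\s\subseteq\U$) gives $C_a\in\B^-$. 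The crucial step is to exploit the pullback relation to produce an $\E$-triangle $T\to C_a\to C_d\dashrightarrow$ with $T\in\T$, thereby placing $C_a\in\T\ast{_{\rm epi.}}\U$. The hypothesis then forces $C_a\in\U$, which is equivalent to $\underline a$ being epi, giving left integrality. Right integrality is handled by the analogous argument using the dual $\T_{\rm mono.}$/$\V$ machinery.

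\emph{Main obstacle.} The principal difficulty is the sufficiency step of constructing the $\E$-triangle $T\to C_a\to C_d\dashrightarrow$ with $T\in\T$: this will require a $3\times 3$-style diagram chase in the extriangulated category to compare the two cokernel constructions along the pullback square, together with a verification that the ``kernel'' term of the induced map $C_a\to C_d$ lies in $\T$. A secondary point to confirm is that the single condition captures both left and right integrality, which relies on the symmetry afforded by the twin cotorsion pair structure; once the key $\E$-triangle is in hand, the remainder reduces to routine bookkeeping using the lemmas of Section~2.
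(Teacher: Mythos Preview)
Your necessity argument is correct and is essentially the paper's argument, streamlined: the paper builds the same object $X$ (called $B$ there), then invokes an abstract pullback of $\underline{X\to B}$ along $\underline f$ and uses monicity of $\underline{X\to B}$ to factor, whereas you recognize directly that the square with $\id_A$ on the left is already that pullback. Both routes finish with Lemma~\ref{L3}.

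For sufficiency there are two genuine gaps.

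\textbf{Right integrality.} Your plan to handle it by ``the analogous argument using the dual $\T_{\rm mono.}/\V$ machinery'' does not work with the stated hypothesis: dualizing the argument for left integrality requires $\B^+\cap(\T_{\rm mono.}*\U)\subseteq\T$, not $\B^-\cap(\T*{_{\rm epi.}}\U)\subseteq\U$. There is no symmetry of the twin cotorsion pair that converts one condition into the other a priori. The paper sidesteps this entirely: since $\underline\h$ is semi-abelian \cite[Theorem~2.32]{LN}, Rump's theorem \cite{R} gives that left integral is equivalent to right integral, so it suffices to prove only left integrality from the given hypothesis. (The dual condition then appears as a corollary of the theorem, not as an input to it.)

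\textbf{The $\E$-triangle $T\to C_a\to C_d$.} You correctly isolate this as the crux, but starting from an \emph{abstract} pullback $P$ in $\underline\h$ gives you no handle on $\E$-triangles in $\B$: the pullback property lives in the quotient $\h/\W$, and $C_a$ depends on a chosen lift $a$ and a chosen $w_P$, with no evident $\E$-triangle linking it to the independently chosen $C_d$. The paper does not attempt this. Instead it \emph{constructs} the pullback from scratch using the kernel machinery of \S2.1: first the ``pre-kernel'' object $A_1$ sitting in an $\E$-triangle $A_1\to B\oplus W_D\to U$ built directly from the epi-$\E$-triangle $C\xrightarrow{d}D\to U$, then its reflection $A\xrightarrow{a_1}A_1$ into $\h$. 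Because $A_1$ is tied to $U$ by an honest $\E$-triangle, two successive $3\times3$ diagrams produce an object $Z$ (morally the $C_a$ you want, padded by objects of $\W$) together with an $\E$-triangle $T_2\to Z\to U$ with $T_2\in\T$, and a separate check places $Z\in\B^-$. The hypothesis then gives $Z\in\U$. The moral is that the $\E$-triangle you seek is not extracted from the pullback relation after the fact; it is baked into the explicit construction of the pullback itself.
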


\begin{proof}
We show the ``if" part first. By \cite[Theorem 2.32]{LN}, we know that $\underline \h$ is semi-abelian. By the result in \cite{R}, it is left integral if and only if it is right integral. We show that $\underline \h$ is left integral.

Let $\underline d: C\to D$ be an epimorphism in $\underline \h$ and $\underline b:B\to D$ be any morphism in $\underline \h$. Note that the following diagram
$$\xymatrix{
A \ar[r]^{\underline a} \ar[d]_{\underline c} &B \ar[d]^{\underline b}\\
C \ar[r]_{\underline d} &D
}
$$
in $\underline \h$ is a pull-back diagram if and only if $A\xrightarrow{\svecv{\underline a}{\underline c}} B\oplus C$ is the kernel of $B\oplus C\xrightarrow{\svech{\underline b}{-\underline d}} D$.

Since $\underline d$ is an epimorphism, it admits an epi-$\EE$-triangle $C\xrightarrow{d} D\to U\dashrightarrow$. $D$ admits an $\EE$-triangle $V_D\to W_D \xrightarrow{w} D\dashrightarrow$ with $V_D\in \V$, $W_D\in \W$. Then we have the following commutative diagram
$$\xymatrix{
V_D \ar[r] \ar@{=}[d] &X \ar[r] \ar[d] &B \ar[d]^b \ar@{-->}[r] &\\
V_D \ar[r] &W_D \ar[r]_w &D\ar@{-->}[r] &
}
$$
which induces an $\EE$-triangle $X\longrightarrow B\oplus W_D \xrightarrow{\svech{b}{w}} D\dashrightarrow$. Then we have the following commutative diagram.
$$\xymatrix{
X \ar@{=}[r] \ar[d] &X \ar[d]\\
A_1 \ar[r]^-{\svecv{b_1}{w_1}} \ar[d]_{c_1} &B\oplus W_D \ar[r] \ar[d]^{\svech{b}{w}} &U \ar@{=}[d] \ar@{-->}[r] &\\
C \ar[r]_d \ar@{-->}[d] &D \ar[r] \ar@{-->}[d] &U \ar@{-->}[r] &\\
&&
}$$
By Lemma \ref{L4}, there exists a morphism $\svech{b'}{w'}:B\oplus W_D\to D$ and a commutative diagram
$$\xymatrix{
A_1 \ar[r]^-{\svecv{b_1}{w_1}} \ar[d]_{c_1} &B\oplus W_D \ar[r] \ar[d]^{\svech{b'}{w'}} &U \ar@{=}[d] \ar@{-->}[r] &\\
C \ar[r]_d &D \ar[r] &U \ar@{-->}[r] &\\
}$$
which induces an $\EE$-triangle $A_1 \xrightarrow{\left(\begin{smallmatrix}b_1\\c_1\\w_1\end{smallmatrix}\right)} B\oplus C\oplus W_D \xrightarrow{\left(\begin{smallmatrix}b'~&-d&~w'\end{smallmatrix}\right)} D\dashrightarrow$.
We have that $\svech{b}{w}-\svech{b'}{w'}$ factors through $U$, since any morphism $U$ to $D$ factors through $w$, we get that $\underline b=\underline b'$.

On the other hand, there exists a morphism $\svecv{\widetilde{b}_1}{\widetilde{w}_1}:A_1\to B\oplus W_D$ and a commutative diagram
$$\xymatrix{
X \ar[r] \ar@{=}[d] &A_1 \ar[r]^{c_1} \ar[d]^{\svecv{\widetilde{b}_1}{\widetilde{w}_1}} &C \ar[d]^d \ar@{-->}[r] &\\
X \ar[r] &B\oplus W_D \ar[r]_-{\svech{b}{w}} &D \ar@{-->}[r] &
}$$
which induces an $\EE$-triangle $A_1 \xrightarrow{\left(\begin{smallmatrix}\widetilde{b}_1\\c_1\\\widetilde{w}_1\end{smallmatrix}\right)} B\oplus C\oplus W_D \xrightarrow{\left(\begin{smallmatrix}b&-d&w\end{smallmatrix}\right)} D\dashrightarrow$. Since $w$ is $\W$-epic, $\left(\begin{smallmatrix}b~&-d&~w\end{smallmatrix}\right)$ is also $\W$-epic.  By the dual of Lemma \ref{L2}, $A_1\in\B^+$. Since $\svecv{b_1}{w_1}-\svecv{\widetilde{b}_1}{\widetilde{w}_1}$ factors through $c_1$, there exists a morphism $\svecv{x}{y}: C\to B\oplus W_D$ such that $\svecv{b_1}{w_1}-\svecv{\widetilde{b}_1}{\widetilde{w}_1}=\svecv{x}{y}\circ c_1$. Then we have the following commutative diagram
$$\xymatrix{
A_1 \ar[r]^-{\left(\begin{smallmatrix}\widetilde{b}_1\\c_1\\\widetilde{w}_1\end{smallmatrix}\right)} \ar@{=}[d] &B\oplus C\oplus W_D \ar[d]^-{\left(\begin{smallmatrix}1&x&0\\0&1&0\\0&y&1\end{smallmatrix}\right)}_-{\cong} \ar[rr]^-{\left(\begin{smallmatrix}b&-d&w\end{smallmatrix}\right)} &&D\ar@{-->}[r] \ar[d]^z &\\
A_1 \ar[r]_-{\left(\begin{smallmatrix} b_1\\c_1\\w_1\end{smallmatrix}\right)} &B\oplus C\oplus W_D \ar[rr]_-{\left(\begin{smallmatrix}b'&-d&w'\end{smallmatrix}\right)} &&D \ar@{-->}[r] &
}
$$
which implies that $z$ is an isomorphism.
We also have another commutative diagram
$$\xymatrix{
V_D \ar[r] \ar@{=}[d] &A_1' \ar[r]^{\svecv{b_1'}{c_1'}\quad} \ar[d]^{w_1'} &B\oplus C \ar[d]^{\svech{-b}{d}} \ar@{-->}[r] &\\
V_D \ar[r] &W_D \ar[r]_w &D\ar@{-->}[r] &
}
$$
which induces an $\EE$-triangle $A_1'\xrightarrow{\left(\begin{smallmatrix}b_1'\\c_1'\\w_1'\end{smallmatrix}\right)} B\oplus C\oplus W_D \xrightarrow{\left(\begin{smallmatrix}b&-d&w\end{smallmatrix}\right)} D\dashrightarrow$. Then we have the following commutative diagram
$$\xymatrix{
A_1' \ar[r]^-{\left(\begin{smallmatrix}b_1'\\c_1\\w_1'\end{smallmatrix}\right)} \ar[d]_{\cong} &B\oplus C\oplus W_D \ar@{=}[d] \ar[rr]^-{\left(\begin{smallmatrix}b&-d&w\end{smallmatrix}\right)} &&D\ar@{-->}[r] \ar@{=}[d] &\\
A_1 \ar[r]_-{\left(\begin{smallmatrix} \widetilde{b}_1\\c_1\\ \widetilde{w}_1\end{smallmatrix}\right)} &B\oplus C\oplus W_D \ar[rr]_-{\left(\begin{smallmatrix}b&-d&w\end{smallmatrix}\right)} &&D \ar@{-->}[r] &.
}
$$
Hence we can get an isomorphism between $\EE$-triangles
$$\xymatrix{
A_1' \ar[r]^-{\left(\begin{smallmatrix}b_1'\\c_1\\w_1'\end{smallmatrix}\right)} \ar[d]_{\cong} &B\oplus C\oplus W_D \ar[d]^{\cong} \ar[rr]^-{\left(\begin{smallmatrix}b&-d&w\end{smallmatrix}\right)} &&D\ar@{-->}[r] \ar[d]^{\cong} &\\
A_1 \ar[r]_-{\left(\begin{smallmatrix} b_1\\c_1\\ w_1\end{smallmatrix}\right)} &B\oplus C\oplus W_D \ar[rr]_-{\left(\begin{smallmatrix}b'&-d&w'\end{smallmatrix}\right)} &&D \ar@{-->}[r] &.
}
$$
Note that $A_1$ admits a commutative diagram
$$\xymatrix{
V_1 \ar@{=}[r] \ar[d] &V_1 \ar[d]\\
A \ar[r]^{w_0\quad} \ar[d]_{a_1} &W_1 \ar[r] \ar[d] &S_1 \ar@{-->}[r] \ar@{=}[d] &\\
A_1 \ar[r] \ar@{-->}[d] &T_1 \ar[r] \ar@{-->}[d] &S_1 \ar@{-->}[r] &\\
&&
}
$$
with $T_1\in \T$, $S_1\in \s$, $V_1\in \V$, $W_1\in \W$ and $A\in \h$. By Lemma \ref{unique}, $\svecv{\underline {b_1a_1}}{\underline {c_1a_1}}:A\to B\oplus C$ is the kernel of $\svech{\underline b'}{-\underline d}=\svech{\underline b}{-\underline d}$.  Now we only need to show that $\underline {b_1a_1}$ is an epimorphism.
The diagram above induces an $\EE$-triangle $$A\xrightarrow{\svecv{a_1}{w_0}} A_1\oplus W_1\longrightarrow T_1\dashrightarrow.$$ Then we have the following commutative diagram.
$$\xymatrix{
A \ar@{=}[r] \ar[d]_-{\svecv{a_1}{w_0}} &A \ar[d]\\
A_1\oplus W_1 \ar[r]^-{\left(\begin{smallmatrix}b_1&0\\w_1&0\\0&1\end{smallmatrix} \right)} \ar[d] &B\oplus W_D\oplus W_1 \ar[r] \ar[d] &U \ar@{=}[d] \ar@{-->}[r] &\\
T_1 \ar[r] \ar@{-->}[d] &Y \ar[r] \ar@{-->}[d] &U \ar@{-->}[r] &\\
&&
}
$$
Since $A$ admits an $\EE$-triangle $A\to W^A\to S^A\dashrightarrow$ with $W^A\in \W$ and $S^A\in \s$, we have the following commutative diagram
$$\xymatrix{
A \ar[r] \ar[d]_-{\left(\begin{smallmatrix}b_1a_1\\w_1a_1\\w_0\end{smallmatrix} \right)} &W^A \ar[r] \ar[d] &S^A \ar@{=}[d] \ar@{-->}[r] &\\
B\oplus W_D\oplus W_1 \ar[r] \ar[d] &Z \ar[r] \ar[d] &S^A \ar@{-->}[r] &\\
Y \ar@{-->}[d] \ar@{=}[r] &Y \ar@{-->}[d]\\
&&
}$$
with $Z\in \B^-$. It is enough to show that $Z\in \U$. Since we have the following commutative diagram
$$\xymatrix{
W^A \ar@{=}[r] \ar[d] &W^A \ar[d]\\
T_2 \ar[r] \ar[d] &Z \ar[r] \ar[d] &U \ar@{=}[d] \ar@{-->}[r] &\\
T_1 \ar[r] \ar@{-->}[d] &Y \ar[r] \ar@{-->}[d] &U \ar@{-->}[r] &\\
&&
}
$$
with $T_2\in \T$, we obtain that $Z\in \B^-\cap (\T*{_{\rm epi.}}\U)\subseteq \U$.

\medskip

Now we show the `` only if " part. Let $X\in \B^-\cap (\T*{_{\rm epi.}}\U)$. It admits an $\EE$-triangle $T\to X\to U\dashrightarrow$ with $T\in \T$ and $U\in {_{\rm epi.}}\U$. $U$ admits an epi-$\EE$-triangle $C\xrightarrow{d} D\to U\dashrightarrow$. Then we have the following commutative diagram
$$\xymatrix{
&T \ar@{=}[r] \ar[d] &T \ar[d]\\
C \ar[r]^r \ar@{=}[d] &B \ar[r] \ar[d]^b &X\ar[d] \ar@{-->}[r] &\\
C \ar[r]_d  &D \ar[r] \ar@{-->}[d] &U \ar@{-->}[d] \ar@{-->}[r] &\\
&&
}
$$
From the second column we can get that $B\in \B^+$. Since $d$ is $\W$-monic, we can get that $r$ is also $\W$-monic. By Lemma \ref{L2}, $B\in \B^-$. Hence $B\in \h$. Consider a pull-back diagram in $\underline \h$:
$$\xymatrix{
A \ar[r]^{\underline a} \ar[d]_{\underline c} &B \ar[d]^{\underline b}\\
C \ar[r]_{\underline d} &D
}
$$
Since $\underline \h$ is integral, $\underline a$ is an epimorphism. Then $\underline {dc}=\underline {brc}=\underline {ba}$. Since $\underline b$ is a monomorphism by Lemma \ref{+-}, we have $\underline {rc}=\underline a$. Hence $\underline r$ is an epimorphism. By Lemma \ref{L3}, we have $X\in \U$. Thus $\B^-\cap (\T*{_{\rm epi.}}\U)\subseteq \U$.
\end{proof}

By duality, we have the following corollary.

\begin{cor}
$\underline \h$ is integral if and only if $\B^+\cap (\T_{\rm mono.}*\U)\subseteq \T$.
\end{cor}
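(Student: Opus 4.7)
The plan is to derive this corollary as a direct application of Theorem \ref{main1} to the opposite extriangulated category $(\B^{\op},\EE^{\op},\mathfrak{s}^{\op})$, whose extriangulated structure was established in \cite{NP}. Under the opposite, a cotorsion pair $(\U,\V)$ on $\B$ becomes a cotorsion pair $(\V,\U)$ on $\B^{\op}$, so the twin cotorsion pair $((\s,\T),(\U,\V))$ on $\B$ corresponds to the twin cotorsion pair $((\V,\U),(\T,\s))$ on $\B^{\op}$ (the condition $\V\subseteq\T$ needed to form a twin pair is automatic from the original).

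The first real step is to set up the dictionary between invariants on the two sides. The associated subcategory is unchanged: $\W=\U\cap\T=\T\cap\U$. A short check using the reversal of $\EE$-triangles shows that $\B^+$ computed for the dual twin cotorsion pair on $\B^{\op}$ equals $\B^-$ computed on $\B$, and vice versa; hence the heart $\h$, and therefore $\underline\h$, coincide as subcategories. A morphism is $\W$-monic in $\B^{\op}$ precisely when it is $\W$-epic in $\B$, so the subcategory ${_{\rm epi.}}\U^{\op}$ formed inside $\B^{\op}$ with respect to the dual twin cotorsion pair (where the role of ``$\U$'' is now played by $\T$) unfolds to the subcategory $\T_{\rm mono.}$ in $\B$. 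Combining this with the reversal of $\EE$-triangles, the extension $\T^{\op}*{_{\rm epi.}}\U^{\op}$ on $\B^{\op}$ translates to $\T_{\rm mono.}*\U$ on $\B$, and the containment ``$\subseteq\U^{\op}$'' becomes ``$\subseteq\T$'' on $\B$.

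Finally, integrality is a self-dual property: left integral in $\B^{\op}$ equals right integral in $\B$ and vice versa, and hence the two-sided notion is preserved by passage to the opposite. Therefore $\underline\h$ is integral when viewed as a quotient coming from $\B$ if and only if it is integral when viewed as a quotient coming from $\B^{\op}$. Applying Theorem \ref{main1} to the twin cotorsion pair $((\V,\U),(\T,\s))$ on $\B^{\op}$ and reading the result off through the dictionary above yields exactly $\underline\h\text{ is integral}\Longleftrightarrow \B^+\cap(\T_{\rm mono.}*\U)\subseteq\T$, which is the statement of the corollary. The main obstacle is purely bookkeeping, namely the verification that every ingredient appearing in Theorem \ref{main1} --- the ambient extriangulated structure, the twin cotorsion pair, the subcategories $\B^{\pm}$ and $\h$, the $\W$-exactness conditions, the epi-$\EE$-triangle notion, and the extension operation $*$ --- behaves as claimed under the reversal; once this dictionary is pinned down, no further work is needed.
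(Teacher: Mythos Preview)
Your proposal is correct and follows the same duality route as the paper, which proves the corollary simply by the words ``by duality'' applied to Theorem \ref{main1}. Your explicit dictionary (passing to $((\V,\U),(\T,\s))$ on $\B^{\op}$, matching $(\B^{\op})^{\pm}$ with $\B^{\mp}$, identifying ${_{\rm epi.}}\U'$ with $\T_{\rm mono.}$, and translating $\T'*^{\op}{_{\rm epi.}}\U'$ to $\T_{\rm mono.}*\U$) is exactly what the duality claim unpacks to.
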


The following example shows that the hearts are not always integral.

\begin{exm}\label{ex2}
{\upshape 	 Let $A=kQ/I$ be an algebra given by the quiver
	 \begin{align}
	 	\begin{minipage}{0.6\hsize}
	 		\ \ \ \ \  \xymatrix{Q: \begin{smallmatrix}6\end{smallmatrix}\ar[r]^{\alpha}&\begin{smallmatrix}5\end{smallmatrix}\ar[r]^{\beta}
	 			&\begin{smallmatrix}4\end{smallmatrix}\ar[r]^{\gamma}&\begin{smallmatrix}3\end{smallmatrix}\ar[r]^{\delta}&\begin{smallmatrix}2\end{smallmatrix}\ar[r]^{\varepsilon}&\begin{smallmatrix}1\end{smallmatrix}}\notag
	 	\end{minipage}
	 \end{align}
and $I=\langle\alpha\beta\gamma\delta, \beta\gamma\delta\varepsilon\rangle.$	The Auslander-Reiten quiver of $\mod A$ is the following:}
\begin{align}
	\tiny{\xymatrix @R=4mm @C=8mm{&&&
			{\begin{smallmatrix}4\\3\\2\\1\end{smallmatrix}}\ar[rd]&&
			{\begin{smallmatrix}5\\4\\3\\2\end{smallmatrix}}\ar[rd]&&
			{\begin{smallmatrix}6\\5\\4\\3\end{smallmatrix}}\ar[rd]&&&\\
			&&{\begin{smallmatrix}3\\2\\1\end{smallmatrix}}\ar[ru]\ar[rd]&&
			{\begin{smallmatrix}4\\3\\2\end{smallmatrix}}\ar[ru]\ar[rd]\ar@{.}[ll]&&
			{\begin{smallmatrix}5\\4\\3\end{smallmatrix}}\ar[ru]\ar[rd]\ar@{.}[ll]&&
			{\begin{smallmatrix}6\\5\\4\end{smallmatrix}}\ar[rd]\ar@{.}[ll]&&\\
			&{\begin{smallmatrix}2\\1\end{smallmatrix}}\ar[ru]\ar[rd]&&
			{\begin{smallmatrix}3\\2\end{smallmatrix}}\ar[ru]\ar[rd]\ar@{.}[ll]&&
			{\begin{smallmatrix}4\\3\end{smallmatrix}}\ar[ru]\ar[rd]\ar@{.}[ll]&&
			{\begin{smallmatrix}5\\4\end{smallmatrix}}\ar[ru]\ar[rd]\ar@{.}[ll]&&
			{\begin{smallmatrix}6\\5\end{smallmatrix}}\ar[rd]\ar@{.}[ll]\\
			{\begin{smallmatrix}1\end{smallmatrix}}\ar[ru]&&\textnormal{$\begin{smallmatrix}2\end{smallmatrix}$}\ar[ru]\ar@{.}[ll]&&
			{\begin{smallmatrix}3\end{smallmatrix}}\ar[ru]\ar@{.}[ll]&&\textnormal{$\begin{smallmatrix}4\end{smallmatrix}$}\ar[ru]\ar@{.}[ll]&&
			{\begin{smallmatrix}5\end{smallmatrix}}\ar[ru]\ar@{.}[ll]&&
			{\begin{smallmatrix}6\end{smallmatrix}}\ar@{.}[ll]
	}}\notag
\end{align}
\end{exm}
In this example, we denote by ``$\bullet$" in the quiver the objects belong to a subcategory and
by  ``$\circ $" the objects do not. Given four subcategories as follows:
\begin{align}
	\tiny{\xymatrix @R=4mm @C2mm{&&&
			{\begin{smallmatrix}\bullet\end{smallmatrix}}\ar[rd]&&
			{\begin{smallmatrix}\bullet\end{smallmatrix}}\ar[rd]&&
			{\begin{smallmatrix}\bullet\end{smallmatrix}}\ar[rd]&&&\\
			\s:=&&{\begin{smallmatrix}\bullet\end{smallmatrix}}\ar[ru]\ar[rd]&&
			{\begin{smallmatrix}\circ\end{smallmatrix}}\ar[ru]\ar[rd]\ar@{.}[ll]&&
			{\begin{smallmatrix}\circ\end{smallmatrix}}\ar[ru]\ar[rd]\ar@{.}[ll]&&
			{\begin{smallmatrix}\circ\end{smallmatrix}}\ar[rd]\ar@{.}[ll]&&\\
			&{\begin{smallmatrix}\bullet\end{smallmatrix}}\ar[ru]\ar[rd]&&
			{\begin{smallmatrix}\circ\end{smallmatrix}}\ar[ru]\ar[rd]\ar@{.}[ll]&&
			{\begin{smallmatrix}\circ\end{smallmatrix}}\ar[ru]\ar[rd]\ar@{.}[ll]&&
			{\begin{smallmatrix}\circ\end{smallmatrix}}\ar[ru]\ar[rd]\ar@{.}[ll]&&
			{\begin{smallmatrix}\bullet\end{smallmatrix}}\ar[rd]\ar@{.}[ll]\\
			{\begin{smallmatrix}\bullet\end{smallmatrix}}\ar[ru]&&\textnormal{$\begin{smallmatrix}\bullet\end{smallmatrix}$}\ar[ru]\ar@{.}[ll]&&
			{\begin{smallmatrix}\circ\end{smallmatrix}}\ar[ru]\ar@{.}[ll]&&\textnormal{$\begin{smallmatrix}\circ\end{smallmatrix}$}\ar[ru]\ar@{.}[ll]&&
			{\begin{smallmatrix}\circ\end{smallmatrix}}\ar[ru]\ar@{.}[ll]&&
			{\begin{smallmatrix}\bullet\end{smallmatrix}}\ar@{.}[ll]\\
	}}\notag
\ \ \quad\quad \quad\quad	\tiny{\xymatrix @R=4mm @C2mm{&&&
			{\begin{smallmatrix}\bullet\end{smallmatrix}}\ar[rd]&&
			{\begin{smallmatrix}\bullet\end{smallmatrix}}\ar[rd]&&
			{\begin{smallmatrix}\bullet\end{smallmatrix}}\ar[rd]&&&\\
			\T:=&&{\begin{smallmatrix}\bullet\end{smallmatrix}}\ar[ru]\ar[rd]&&
			{\begin{smallmatrix}\bullet\end{smallmatrix}}\ar[ru]\ar[rd]\ar@{.}[ll]&&
			{\begin{smallmatrix}\circ\end{smallmatrix}}\ar[ru]\ar[rd]\ar@{.}[ll]&&
			{\begin{smallmatrix}\bullet\end{smallmatrix}}\ar[rd]\ar@{.}[ll]&&\\
			&{\begin{smallmatrix}\bullet\end{smallmatrix}}\ar[ru]\ar[rd]&&
			{\begin{smallmatrix}\bullet\end{smallmatrix}}\ar[ru]\ar[rd]\ar@{.}[ll]&&
			{\begin{smallmatrix}\circ\end{smallmatrix}}\ar[ru]\ar[rd]\ar@{.}[ll]&&
			{\begin{smallmatrix}\circ\end{smallmatrix}}\ar[ru]\ar[rd]\ar@{.}[ll]&&
			{\begin{smallmatrix}\bullet\end{smallmatrix}}\ar[rd]\ar@{.}[ll]\\
			{\begin{smallmatrix}\circ\end{smallmatrix}}\ar[ru]&&\textnormal{$\begin{smallmatrix}\bullet\end{smallmatrix}$}\ar[ru]\ar@{.}[ll]&&
			{\begin{smallmatrix}\bullet\end{smallmatrix}}\ar[ru]\ar@{.}[ll]&&\textnormal{$\begin{smallmatrix}\circ\end{smallmatrix}$}\ar[ru]\ar@{.}[ll]&&
			{\begin{smallmatrix}\circ\end{smallmatrix}}\ar[ru]\ar@{.}[ll]&&
			{\begin{smallmatrix}\bullet\end{smallmatrix}}\ar@{.}[ll]\\
	}}\notag
\end{align}
\begin{align}
	\tiny{\xymatrix @R=4mm @C2mm{&&&
			{\begin{smallmatrix}\bullet\end{smallmatrix}}\ar[rd]&&
			{\begin{smallmatrix}\bullet\end{smallmatrix}}\ar[rd]&&
			{\begin{smallmatrix}\bullet\end{smallmatrix}}\ar[rd]&&&\\
			\U:=&&{\begin{smallmatrix}\bullet\end{smallmatrix}}\ar[ru]\ar[rd]&&
			{\begin{smallmatrix}\circ\end{smallmatrix}}\ar[ru]\ar[rd]\ar@{.}[ll]&&
			{\begin{smallmatrix}\circ\end{smallmatrix}}\ar[ru]\ar[rd]\ar@{.}[ll]&&
			{\begin{smallmatrix}\bullet\end{smallmatrix}}\ar[rd]\ar@{.}[ll]&&\\
			&{\begin{smallmatrix}\bullet\end{smallmatrix}}\ar[ru]\ar[rd]&&
			{\begin{smallmatrix}\circ\end{smallmatrix}}\ar[ru]\ar[rd]\ar@{.}[ll]&&
			{\begin{smallmatrix}\circ\end{smallmatrix}}\ar[ru]\ar[rd]\ar@{.}[ll]&&
			{\begin{smallmatrix}\bullet\end{smallmatrix}}\ar[ru]\ar[rd]\ar@{.}[ll]&&
			{\begin{smallmatrix}\bullet\end{smallmatrix}}\ar[rd]\ar@{.}[ll]\\
			{\begin{smallmatrix}\bullet\end{smallmatrix}}\ar[ru]&&\textnormal{$\begin{smallmatrix}\bullet\end{smallmatrix}$}\ar[ru]\ar@{.}[ll]&&
			{\begin{smallmatrix}\circ\end{smallmatrix}}\ar[ru]\ar@{.}[ll]&&\textnormal{$\begin{smallmatrix}\circ\end{smallmatrix}$}\ar[ru]\ar@{.}[ll]&&
			{\begin{smallmatrix}\bullet\end{smallmatrix}}\ar[ru]\ar@{.}[ll]&&
			{\begin{smallmatrix}\bullet\end{smallmatrix}}\ar@{.}[ll]\\
	}}\notag
	\ \ \quad\quad \quad\quad	\tiny{\xymatrix @R=4mm @C2mm{&&&
		{\begin{smallmatrix}\bullet\end{smallmatrix}}\ar[rd]&&
			{\begin{smallmatrix}\bullet\end{smallmatrix}}\ar[rd]&&
			{\begin{smallmatrix}\bullet\end{smallmatrix}}\ar[rd]&&&\\
			\V:=&&{\begin{smallmatrix}\bullet\end{smallmatrix}}\ar[ru]\ar[rd]&&
			{\begin{smallmatrix}\circ\end{smallmatrix}}\ar[ru]\ar[rd]\ar@{.}[ll]&&
			{\begin{smallmatrix}\circ\end{smallmatrix}}\ar[ru]\ar[rd]\ar@{.}[ll]&&
			{\begin{smallmatrix}\bullet\end{smallmatrix}}\ar[rd]\ar@{.}[ll]&&\\
			&{\begin{smallmatrix}\bullet\end{smallmatrix}}\ar[ru]\ar[rd]&&
			{\begin{smallmatrix}\circ\end{smallmatrix}}\ar[ru]\ar[rd]\ar@{.}[ll]&&
			{\begin{smallmatrix}\circ\end{smallmatrix}}\ar[ru]\ar[rd]\ar@{.}[ll]&&
			{\begin{smallmatrix}\circ\end{smallmatrix}}\ar[ru]\ar[rd]\ar@{.}[ll]&&
			{\begin{smallmatrix}\bullet\end{smallmatrix}}\ar[rd]\ar@{.}[ll]\\
			{\begin{smallmatrix}\circ\end{smallmatrix}}\ar[ru]&&\textnormal{$\begin{smallmatrix}\bullet\end{smallmatrix}$}\ar[ru]\ar@{.}[ll]&&
			{\begin{smallmatrix}\circ\end{smallmatrix}}\ar[ru]\ar@{.}[ll]&&\textnormal{$\begin{smallmatrix}\circ\end{smallmatrix}$}\ar[ru]\ar@{.}[ll]&&
			{\begin{smallmatrix}\circ\end{smallmatrix}}\ar[ru]\ar@{.}[ll]&&
			{\begin{smallmatrix}\bullet\end{smallmatrix}}\ar@{.}[ll]\\
	}}\notag
\end{align}
then $(\s, \T), (\U, \V)$ form a twin cotorsion pair and $\underline \h=\add\bigg\{ \begin{smallmatrix}4\\3\end{smallmatrix}\oplus\begin{smallmatrix}5\\4\\3\end{smallmatrix}\oplus\begin{smallmatrix}4\end{smallmatrix} \bigg\}$. This heart is not integral, since we have short exact sequences
$$\xymatrix{
{\begin{smallmatrix}4\\3\end{smallmatrix}} \ar[r] &{\begin{smallmatrix}5\\4\\3\end{smallmatrix}}\oplus\begin{smallmatrix}4\end{smallmatrix} \ar[r] &{\begin{smallmatrix}5\\4\end{smallmatrix}},} \quad \xymatrix{{\begin{smallmatrix}3\end{smallmatrix}} \ar[r] &{\begin{smallmatrix}5\\4\\3\end{smallmatrix}} \ar[r] &{\begin{smallmatrix}5\\4\end{smallmatrix}}}
$$
with ${\begin{smallmatrix}5\\4\end{smallmatrix}}\in {_{epi.}}\U$ and ${\begin{smallmatrix}3\end{smallmatrix}}\in \T$.

\medskip

The following proposition shows that the condition $\U\subseteq (\s*\T)$ (resp. $\T\subseteq (\U*\V)$) itself is a sufficient condition for hearts being abelian (there is no need to assume that $\B$ has enough projectives or enough injectives), hence generalizes Theorem \ref{1}.

\begin{prop}\label{gen}
For a twin cotorsion pair $((\s,\T),(\U,\V))$, we have:
\begin{itemize}
\item[(1)] if $\U\subseteq (\s*\T)$, then $\B^-\cap (\T*\U)\subseteq \U$;
\item[(2)] if $\T\subseteq (\U*\V)$, then $\B^+\cap (\T*\U)\subseteq \T$.
\end{itemize}
\end{prop}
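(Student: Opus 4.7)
The plan is to prove (1); the proof of (2) is entirely dual. Given $X\in\B^-\cap(\T*\U)$, unpack the defining $\EE$-triangles $T\xrightarrow{f}X\xrightarrow{g}U\dashrightarrow$ with $T\in\T$, $U\in\U$, and $X\xrightarrow{w'}W\to S_X\dashrightarrow$ with $W\in\W$, $S_X\in\s$. The goal is to show $X\in\U$.

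The first step is to apply Lemma \ref{L4} (the pushout construction) to the $\EE$-triangle $X\to W\to S_X$ together with the morphism $g\colon X\to U$. This yields a commutative diagram of $\EE$-triangles
\[
\xymatrix{
X \ar[r]^{w'} \ar[d]^g & W \ar[r] \ar[d] & S_X \ar@{=}[d] \ar@{-->}[r] & \\
U \ar[r] & E \ar[r] & S_X \ar@{-->}[r] &
}
\]
along with an induced $\EE$-triangle $X\to U\oplus W\to E\dashrightarrow$. Since $U\in\U$ and $S_X\in\s\subseteq\U$, and $\U$ is closed under extensions (being the left component of a cotorsion pair), the bottom row $U\to E\to S_X$ gives $E\in\U$. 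Thus we obtain an $\EE$-triangle $X\to U\oplus W\to E\dashrightarrow$ in which both the middle term $U\oplus W$ and the last term $E$ lie in $\U$.

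To conclude $X\in\U$, it suffices to show that this $\EE$-triangle splits, since then $X$ is realized as a direct summand of $U\oplus W\in\U$, and closure of $\U$ under direct summands yields $X\in\U$. Equivalently, we must prove that the $\EE$-class in $\EE(E,X)$ of this triangle vanishes. Here we finally invoke the hypothesis: apply $\U\subseteq\s*\T$ to $E$, obtaining $S_E\to E\to T_E\dashrightarrow$ with $S_E\in\s$, $T_E\in\T$, and use the dual of Lemma \ref{L4} to pull back $X\to U\oplus W\to E$ along $S_E\to E$. Combined with the long exact sequences for $\Hom(-,X)$, the vanishings $\EE(\s,\T)=0$ and $\EE(\U,\V)=0$, and the $\B^-$ structure of $X$ (which controls $\EE(S_E,X)$ via $X\to W\to S_X$), one forces the obstruction class in $\EE(E,X)$ to be zero.

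The main obstacle is this final vanishing step: it requires carefully interleaving the three inputs (the $\B^-$ structure, the $\T*\U$ structure, and the splitting hypothesis $\U\subseteq\s*\T$) through repeated applications of (ET4), Lemma \ref{L4}, and their duals, while tracking the extension classes along the resulting chain of $3\times 3$ diagrams. For (2), the dual argument uses $V_Y\to W_Y\to Y$ (from $Y\in\B^+$) together with $T_0\to Y\to U_0$ (from $Y\in\T*\U$), and applies $\T\subseteq\U*\V$ to $T_0$; one forms the (ET4)-combination, which produces $U_{T_0}\to Y\to V_{T_0}\oplus U_0\dashrightarrow$ after using $\EE(\U,\V)=0$ to split the auxiliary $\EE$-triangle $V_{T_0}\to F\to U_0$, and then concludes $Y\in\T$ by a parallel argument.
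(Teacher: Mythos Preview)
Your first step---forming the pushout of $X\to W\to S_X$ along $g$ to obtain $X\to U\oplus W\to E\dashrightarrow$ with $E\in\U$---is correct and natural. The reduction to ``this triangle splits'' is also logically sound as a sufficient condition. The problem is that you never actually prove the splitting, and it is not clear that it holds.

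Concretely: from $S_E\xrightarrow{i} E\xrightarrow{p} T_E\dashrightarrow$ you get an exact sequence $\EE(T_E,X)\xrightarrow{p^*}\EE(E,X)\xrightarrow{i^*}\EE(S_E,X)$. Your sketch only addresses $\EE(S_E,X)$, and even there the $\B^-$-triangle $X\to W\to S_X$ merely gives a surjection $\Hom(S_E,S_X)\twoheadrightarrow\EE(S_E,X)$ (via $\EE(S_E,W)=0$); it does \emph{not} force $i^*\delta=0$. And even if $i^*\delta=0$, you would still need to kill the preimage in $\EE(T_E,X)$---but $T_E\in\T$ while $X$ is only in $\B^-$, so neither $\EE(\s,\T)=0$ nor $\EE(\U,\V)=0$ touches this group. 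Your paragraph ``one forces the obstruction class to be zero'' is exactly the place where a genuine argument is missing, and I do not see how to fill it with the tools you list. Indeed, the desired conclusion $X\in\U$ is strictly weaker than the splitting of your triangle, so you may be aiming at a statement that is too strong.

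The paper's proof avoids this issue by applying $\U\subseteq\s*\T$ to $U$ itself rather than to the auxiliary object $E$. Writing $S_1\to U\to T_1\dashrightarrow$ and combining it with $T\to X\to U\dashrightarrow$ via two (ET4)-diagrams produces an $\EE$-triangle $S_1\to X\to T_2\dashrightarrow$ with $T_2\in\T$. Feeding this into the $\B^-$-triangle $X\to W^X\to S^X\dashrightarrow$ through two more (ET4)-diagrams yields $S_1\to X\oplus S_2\to W^X\dashrightarrow$ with $S_2\in\s$. Since $S_1\in\s\subseteq\U$ and $W^X\in\W\subseteq\U$, closure of $\U$ under extensions gives $X\oplus S_2\in\U$, hence $X\in\U$. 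No splitting is required---one only needs the closure properties of $\U$. I recommend reworking your argument along these lines: decompose $U$ (not $E$), and aim for an $\EE$-triangle exhibiting $X$ as a summand of an object sandwiched between two objects of $\U$.
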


\begin{proof}
We show that $\U\subseteq (\s*\T)$ implies $\B^-\cap (\T* \U)\subseteq \U$, the other case is by dual.

Assume we have an $\EE$-triangle $T\to X\to U\dashrightarrow$ with $T\in \T$, $X\in \B^-$ and $U\in \U$. Since $\U\subseteq (\s*\T)$, $U$ admits an $\EE$-triangle $S_1\to U\to T_1\dashrightarrow$ with $S_1\in \s$ and $T_1\in \T$. Then we have the following commutative diagrams
$$\xymatrix{
T \ar@{=}[r] \ar[d] &T \ar[d]\\
S_1 \oplus T \ar[r] \ar[d] &X \ar[r] \ar[d] &T_1 \ar@{=}[d] \ar@{-->}[r] &\\
S_1 \ar[r] \ar@{-->}[d] &U \ar[r] \ar@{-->}[d] &T_1 \ar@{-->}[r] &\\
&&\\
}\quad \quad \xymatrix{
S_1 \ar@{=}[r] \ar[d] &S_1 \ar[d]\\
S_1 \oplus T \ar[r] \ar[d] &X \ar[r] \ar[d] &T_1 \ar@{=}[d] \ar@{-->}[r] &\\
T \ar[r] \ar@{-->}[d] &T_2 \ar[r] \ar@{-->}[d] &T_1 \ar@{-->}[r] &\\
&&
}
$$
with $T_2\in \T$. Since $X$ admits an $\EE$-triangle $X\xrightarrow{w} W^X\to S^X\dashrightarrow$ with $W^X\in \W$ and $S^X\in \s$, we have the following commutative diagrams
$$\xymatrix{
S_1 \ar[r] \ar@{=}[d] &X \ar[r]^t \ar[d]^w &T_2 \ar@{-->}[r] \ar[d]^{\svecv{1}{0}} &\\
S_1 \ar[r] &W^X \ar[r]^-{\svecv{w_1}{w_2}} \ar[d] &T_2\oplus S^X \ar@{-->}[r] \ar[d] &\\
&S^X \ar@{=}[r] \ar@{-->}[d] &S^X \ar@{-->}[d]\\
&&\\
}\quad\quad \xymatrix{
S_1 \ar[r] \ar@{=}[d] &S_2 \ar[r] \ar[d] &S^X \ar@{-->}[r] \ar[d] &\\
S_1 \ar[r] &W^X \ar[r]^-{\svecv{w_1}{w_2}} \ar[d]^{w_1} &T_2\oplus S^X \ar@{-->}[r] \ar[d]^{\svech{1}{0}} &\\
&T_2 \ar@{=}[r] \ar@{-->}[d] &T_2 \ar@{-->}[d]\\
&&
}
$$
with $S_2\in \s$. Then we have the following commutative diagram
$$\xymatrix{
S_1 \ar[r] \ar[d] &X \ar[r]^t \ar[d]^w &T_2 \ar@{-->}[r] \ar@{=}[d] &\\
S_2 \ar[r] &W_X \ar[r]_{w_1} &T_2 \ar@{-->}[r] &
}
$$
which induces an $\EE$-triangle $S_1\to X\oplus S_2 \to W_X\dashrightarrow$. Hence $X\oplus S_2\in \U$, which implies that $X\in \U$.
\end{proof}

The following example shows that the conditions in Proposition \ref{gen} are only sufficient for the hearts being abelian.

\begin{exm}\label{ex1}
	Let $A=kQ/I$ be the algebra given in Example \ref{ex2}. We still denote by ``$\bullet$" in the quiver the objects belong to a subcategory and
by  ``$\circ $" the objects do not. Given four subcategories as follows:
\begin{align}
	\tiny{\xymatrix @R=4mm @C2mm{&&&
			{\begin{smallmatrix}\bullet\end{smallmatrix}}\ar[rd]&&
			{\begin{smallmatrix}\bullet\end{smallmatrix}}\ar[rd]&&
			{\begin{smallmatrix}\bullet\end{smallmatrix}}\ar[rd]&&&\\
			\s:=&&{\begin{smallmatrix}\bullet\end{smallmatrix}}\ar[ru]\ar[rd]&&
			{\begin{smallmatrix}\circ\end{smallmatrix}}\ar[ru]\ar[rd]\ar@{.}[ll]&&
			{\begin{smallmatrix}\circ\end{smallmatrix}}\ar[ru]\ar[rd]\ar@{.}[ll]&&
			{\begin{smallmatrix}\circ\end{smallmatrix}}\ar[rd]\ar@{.}[ll]&&\\
			&{\begin{smallmatrix}\bullet\end{smallmatrix}}\ar[ru]\ar[rd]&&
			{\begin{smallmatrix}\circ\end{smallmatrix}}\ar[ru]\ar[rd]\ar@{.}[ll]&&
			{\begin{smallmatrix}\circ\end{smallmatrix}}\ar[ru]\ar[rd]\ar@{.}[ll]&&
			{\begin{smallmatrix}\circ\end{smallmatrix}}\ar[ru]\ar[rd]\ar@{.}[ll]&&
			{\begin{smallmatrix}\circ\end{smallmatrix}}\ar[rd]\ar@{.}[ll]\\
			{\begin{smallmatrix}\bullet\end{smallmatrix}}\ar[ru]&&\textnormal{$\begin{smallmatrix}\bullet\end{smallmatrix}$}\ar[ru]\ar@{.}[ll]&&
			{\begin{smallmatrix}\circ\end{smallmatrix}}\ar[ru]\ar@{.}[ll]&&\textnormal{$\begin{smallmatrix}\circ\end{smallmatrix}$}\ar[ru]\ar@{.}[ll]&&
			{\begin{smallmatrix}\circ\end{smallmatrix}}\ar[ru]\ar@{.}[ll]&&
			{\begin{smallmatrix}\bullet\end{smallmatrix}}\ar@{.}[ll]\\
	}}\notag
\ \ \quad\quad \quad\quad	\tiny{\xymatrix @R=4mm @C2mm{&&&
			{\begin{smallmatrix}\bullet\end{smallmatrix}}\ar[rd]&&
			{\begin{smallmatrix}\bullet\end{smallmatrix}}\ar[rd]&&
			{\begin{smallmatrix}\bullet\end{smallmatrix}}\ar[rd]&&&\\
			\T:=&&{\begin{smallmatrix}\bullet\end{smallmatrix}}\ar[ru]\ar[rd]&&
			{\begin{smallmatrix}\bullet\end{smallmatrix}}\ar[ru]\ar[rd]\ar@{.}[ll]&&
			{\begin{smallmatrix}\circ\end{smallmatrix}}\ar[ru]\ar[rd]\ar@{.}[ll]&&
			{\begin{smallmatrix}\bullet\end{smallmatrix}}\ar[rd]\ar@{.}[ll]&&\\
			&{\begin{smallmatrix}\bullet\end{smallmatrix}}\ar[ru]\ar[rd]&&
			{\begin{smallmatrix}\bullet\end{smallmatrix}}\ar[ru]\ar[rd]\ar@{.}[ll]&&
			{\begin{smallmatrix}\bullet\end{smallmatrix}}\ar[ru]\ar[rd]\ar@{.}[ll]&&
			{\begin{smallmatrix}\circ\end{smallmatrix}}\ar[ru]\ar[rd]\ar@{.}[ll]&&
			{\begin{smallmatrix}\bullet\end{smallmatrix}}\ar[rd]\ar@{.}[ll]\\
			{\begin{smallmatrix}\circ\end{smallmatrix}}\ar[ru]&&\textnormal{$\begin{smallmatrix}\bullet\end{smallmatrix}$}\ar[ru]\ar@{.}[ll]&&
			{\begin{smallmatrix}\bullet\end{smallmatrix}}\ar[ru]\ar@{.}[ll]&&\textnormal{$\begin{smallmatrix}\bullet\end{smallmatrix}$}\ar[ru]\ar@{.}[ll]&&
			{\begin{smallmatrix}\circ\end{smallmatrix}}\ar[ru]\ar@{.}[ll]&&
			{\begin{smallmatrix}\bullet\end{smallmatrix}}\ar@{.}[ll]\\
	}}\notag
\end{align}
\begin{align}
	\tiny{\xymatrix @R=4mm @C2mm{&&&
			{\begin{smallmatrix}\bullet\end{smallmatrix}}\ar[rd]&&
			{\begin{smallmatrix}\bullet\end{smallmatrix}}\ar[rd]&&
			{\begin{smallmatrix}\bullet\end{smallmatrix}}\ar[rd]&&&\\
			\U:=&&{\begin{smallmatrix}\bullet\end{smallmatrix}}\ar[ru]\ar[rd]&&
			{\begin{smallmatrix}\circ\end{smallmatrix}}\ar[ru]\ar[rd]\ar@{.}[ll]&&
			{\begin{smallmatrix}\circ\end{smallmatrix}}\ar[ru]\ar[rd]\ar@{.}[ll]&&
			{\begin{smallmatrix}\circ\end{smallmatrix}}\ar[rd]\ar@{.}[ll]&&\\
			&{\begin{smallmatrix}\bullet\end{smallmatrix}}\ar[ru]\ar[rd]&&
			{\begin{smallmatrix}\circ\end{smallmatrix}}\ar[ru]\ar[rd]\ar@{.}[ll]&&
			{\begin{smallmatrix}\circ\end{smallmatrix}}\ar[ru]\ar[rd]\ar@{.}[ll]&&
			{\begin{smallmatrix}\circ\end{smallmatrix}}\ar[ru]\ar[rd]\ar@{.}[ll]&&
			{\begin{smallmatrix}\bullet\end{smallmatrix}}\ar[rd]\ar@{.}[ll]\\
			{\begin{smallmatrix}\bullet\end{smallmatrix}}\ar[ru]&&\textnormal{$\begin{smallmatrix}\bullet\end{smallmatrix}$}\ar[ru]\ar@{.}[ll]&&
			{\begin{smallmatrix}\circ\end{smallmatrix}}\ar[ru]\ar@{.}[ll]&&\textnormal{$\begin{smallmatrix}\circ\end{smallmatrix}$}\ar[ru]\ar@{.}[ll]&&
			{\begin{smallmatrix}\bullet\end{smallmatrix}}\ar[ru]\ar@{.}[ll]&&
			{\begin{smallmatrix}\bullet\end{smallmatrix}}\ar@{.}[ll]\\
	}}\notag
	\ \ \quad\quad \quad\quad	\tiny{\xymatrix @R=4mm @C2mm{&&&
		{\begin{smallmatrix}\bullet\end{smallmatrix}}\ar[rd]&&
			{\begin{smallmatrix}\bullet\end{smallmatrix}}\ar[rd]&&
			{\begin{smallmatrix}\bullet\end{smallmatrix}}\ar[rd]&&&\\
			\V:=&&{\begin{smallmatrix}\bullet\end{smallmatrix}}\ar[ru]\ar[rd]&&
			{\begin{smallmatrix}\circ\end{smallmatrix}}\ar[ru]\ar[rd]\ar@{.}[ll]&&
			{\begin{smallmatrix}\circ\end{smallmatrix}}\ar[ru]\ar[rd]\ar@{.}[ll]&&
			{\begin{smallmatrix}\bullet\end{smallmatrix}}\ar[rd]\ar@{.}[ll]&&\\
			&{\begin{smallmatrix}\bullet\end{smallmatrix}}\ar[ru]\ar[rd]&&
			{\begin{smallmatrix}\bullet\end{smallmatrix}}\ar[ru]\ar[rd]\ar@{.}[ll]&&
			{\begin{smallmatrix}\circ\end{smallmatrix}}\ar[ru]\ar[rd]\ar@{.}[ll]&&
			{\begin{smallmatrix}\circ\end{smallmatrix}}\ar[ru]\ar[rd]\ar@{.}[ll]&&
			{\begin{smallmatrix}\bullet\end{smallmatrix}}\ar[rd]\ar@{.}[ll]\\
			{\begin{smallmatrix}\circ\end{smallmatrix}}\ar[ru]&&\textnormal{$\begin{smallmatrix}\bullet\end{smallmatrix}$}\ar[ru]\ar@{.}[ll]&&
			{\begin{smallmatrix}\bullet\end{smallmatrix}}\ar[ru]\ar@{.}[ll]&&\textnormal{$\begin{smallmatrix}\circ\end{smallmatrix}$}\ar[ru]\ar@{.}[ll]&&
			{\begin{smallmatrix}\circ\end{smallmatrix}}\ar[ru]\ar@{.}[ll]&&
			{\begin{smallmatrix}\bullet\end{smallmatrix}}\ar@{.}[ll]\\
	}}\notag
\end{align}
then $((\s, \T), (\U, \V))$ forms a twin cotorsion pair and $\underline \h=\add\bigg\{\begin{smallmatrix}5\\4\\3\end{smallmatrix}\bigg\}$. This heart is abelian (hence is integral), but
$$\U \nsubseteq \s*\T, \quad \T\nsubseteq \U*\V.$$
\end{exm}

\section{Abelian Hearts}
In this section, we assume that $\B$ is Krull-Schmidt. We investigate several conditions when the hearts of twin cotorsion pairs being abelian, which are related to the results in the previous sections.

For convenience, we use the following notion introduced in \cite{Be}: Let $\X$ and $\Y$ be two full subcategories of $\B$. We denote by $\X\oplus\Y$ the full subcategory
of $\B$ consisting of the direct summands of direct sums $X\oplus Y$, where $X\in\X$ and $Y\in \Y$.

\begin{prop}\label{prop2}
If $\underline{\h}$ is abelian, then ${_{\rm epi.}}\U\subseteq\s\oplus\W$
and $\T_{\rm mono.}\subseteq\V\oplus\W$.
\end{prop}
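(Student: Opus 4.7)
The plan is to establish the inclusion ${_{\rm epi.}}\U \subseteq \s \oplus \W$; the other inclusion, $\T_{\rm mono.} \subseteq \V \oplus \W$, will then follow by a completely dual argument (switching pullbacks with pushouts and applying the cotorsion pair $(\U,\V)$ to objects of $\h$ in place of $(\s,\T)$ to $U$).

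Starting from $U \in {_{\rm epi.}}\U$ witnessed by an epi-$\EE$-triangle $A \xrightarrow{f} B \to U \dashrightarrow$ (so $A,B \in \h$, $f$ is $\W$-monic, and $\underline f$ is epic in $\underline\h$), I would apply the cotorsion pair $(\s,\T)$ to $U \in \U$ to obtain an $\EE$-triangle $T_U \to S_U \to U \dashrightarrow$ with $T_U \in \T$ and $S_U \in \s$. A $3\times 3$ construction combining this with the epi-$\EE$-triangle (sharing $U$ as the common third term) produces $\EE$-triangles $T_U \to M \to B \dashrightarrow$ and $A \to M \to S_U \dashrightarrow$ with $M \in \B^{+} \cap \B^{-} = \h$ (by Lemma~\ref{+-}(1) and (3)). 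By Lemma~\ref{+-}(4) applied with $S_U \in \s \subseteq \U$, the arrow $\underline{A \to M}$ is epic in $\underline\h$, and by Lemma~\ref{+-}(2) applied with $T_U \in \T$, the arrow $\underline{M \to B}$ is monic. Their composition equals $\underline f$, which is epic; in the abelian category $\underline\h$ a monomorphism that factors an epimorphism must itself be iso, so $\underline{M \to B}$ is an isomorphism in $\underline\h$.

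Exploiting the Krull--Schmidt property of $\B$, I would decompose $M = B_0 \oplus W_M$ and $B = B_0 \oplus W_B$ with $B_0$ free of $\W$-summands and $W_M,W_B \in \W$. Since $B_0$ is $\W$-free and its indecomposable summands have local endomorphism rings, the $(B_0,B_0)$-block of $M \to B$ (iso modulo $\W$) lifts to a genuine isomorphism. After a change of basis the $\EE$-triangle $T_U \to M \to B$ splits as the direct sum of the trivial triangle $0 \to B_0 \xrightarrow{\cong} B_0 \dashrightarrow$ and a residual $\EE$-triangle $T_U \to W_M \to W_B \dashrightarrow$ with $W_M,W_B \in \W$. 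Combining this residual triangle with the dual pushout analysis (apply $(\U,\V)$ to $A$ and form the $3 \times 3$ with the epi-$\EE$-triangle, yielding an analogous iso in $\underline\h$ by the same abelian argument) forces the residual triangle to split, so $T_U$ is a summand of $W_M$ and hence $T_U \in \W$. Once $T_U \in \W$, the triangle $T_U \to S_U \to U \dashrightarrow$ has its first two terms in $\U$, and a further extension-class analysis (propagating the pullback--pushout compatibility) yields $S_U \cong T_U \oplus U$, whence $U$ is a direct summand of $S_U \in \s \subseteq \s \oplus \W$.

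The main obstacle is the final paragraph: simultaneously extracting $T_U \in \W$ from the residual $\EE$-triangle and then forcing the $(\s,\T)$-triangle $T_U \to S_U \to U$ to split. Neither extension-closure of $\W$ nor the straightforward application of Lemma~\ref{+-} suffices; the argument must genuinely combine the pullback construction (yielding $T_U \to M \to B$) with its dual pushout counterpart, whose compatibility is guaranteed only by the full abelianness of $\underline\h$ rather than by semi-abelianness or integrality alone.
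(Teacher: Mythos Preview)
Your first two paragraphs are essentially correct and in one place slightly simpler than the paper: you deduce that $\underline{M \to B}$ is epic directly from the factorization $\underline f = \underline{(M \to B)} \circ \underline{(A \to M)}$ with $\underline f$ epic, whereas the paper proves this separately via an auxiliary $\EE$-triangle. The Krull--Schmidt splitting of $g\colon M \to B$ into $\mathrm{diag}(1_{B_0}, g')$ and the resulting residual $\EE$-triangle $T_U \to W_M \to W_B$ can indeed be justified along the lines you sketch.

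The third paragraph, however, does not work, and the obstacle is genuine. First, nothing forces the residual triangle $T_U \to W_M \to W_B$ to split: since $W_M, W_B \in \W$ are zero objects in $\underline\h$, abelianness of $\underline\h$ gives no information about this triangle, and your proposed ``dual pushout'' (forming a $3\times 3$ from $A \to V^A \to U^A$ and the epi-$\EE$-triangle) does not produce an object of $\h$ on which to run the argument, because $U \in \U$ need not lie in $\B^+$. Second, even granting $T_U \in \W$, the triangle $T_U \to S_U \to U$ still need not split: that would require $\EE(U, T_U) = 0$, but $U \in \U$ and $T_U \in \W = \U \cap \T$ only gives vanishing against $\V$, and $\W \not\subseteq \V$ in general (cf.\ Example~\ref{ex3}, where $\W = \T \supsetneq \V$).

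The paper avoids both issues by a different route after the isomorphism $\underline g^{-1} = \underline h$ is established. Writing $1_C - hg = w_2 w_1$ through some $W \in \W$ and using that $f$ is $\W$-monic, one absorbs the $\W$-part into $h$ to obtain a genuine morphism $h'\colon B \to C$ in $\B$ with $h' f = c$. This yields a morphism of $\EE$-triangles $(1_A, h', \ast)$ from $A \to B \to U$ to $A \to C \to S$; together with the existing $(1_A, g, \ast)$ back, it forces $U$ to be a direct summand of $B \oplus S$. Since $U \in \U$ and $B \in \h$, every map $U \to B$ factors through $W_B \in \W$, so $U$ is already a summand of $S \oplus W_B \in \s \oplus \W$. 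The missing idea in your attempt is precisely this lift of the quotient-category isomorphism to an honest section in $\B$ via the $\W$-monic hypothesis on $f$; once you have it, there is no need to control $T_U$ at all.
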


\begin{proof}
We only show that ${_{\rm epi.}}\U\subseteq\s\oplus\W$, the other half is by dual. For any $U\in{_{\rm epi.}}\U$, there exists an epi-$\EE$-triangle $A\xrightarrow{~f~} B\longrightarrow U\dashrightarrow$. Since $(\s,\T)$ is a cotorsion pair on $\B$,
$U$ admits an $\EE$-triangle
$T\rightarrow S\to U\dashrightarrow$
where $T\in\T$ and $S\in\s$. We have the following commutative diagram
$$\xymatrix{
&T\ar@{=}[r] \ar[d] &T \ar[d]\\
A\ar[r]^c \ar@{=}[d]&C\ar[r] \ar[d]^g &S \ar[d] \ar@{-->}[r] &\\
A\ar[r]_f &B \ar[r] \ar@{-->}[d] &U \ar@{-->}[r] \ar@{-->}[d]&\\
&&
}$$
with $C\in \h$ by Lemma \ref{+-}. This second column shows that $\underline{g}$ is a monomorphism in $\underline{\h}$. Since $S$ admits an $\EE$-triangle $S\xrightarrow{m} M\to S'\dashrightarrow$ with $M\in \s\cap \T$ and $S'\in \s$, we have the following commutative diagram
$$\xymatrix{
C \ar[r]^{\svecv{g}{s}\quad} \ar@{=}[d] &B\oplus S \ar[r] \ar[d]^{ \left( \begin{smallmatrix}1&0\\0&m\end{smallmatrix} \right)} &U \ar@{-->}[r] \ar[d] &\\
C \ar[r]_{\svecv{g}{ms}\quad}  &B\oplus M \ar[r] \ar[d] &U' \ar@{-->}[r] \ar[d] &\\
&S' \ar@{=}[r] \ar@{-->}[d] &S' \ar@{-->}[d]\\
&&\\
}$$
with $U'\in \U$. This shows that $\underline{g}$ is an epimorphism in $\underline{\h}$.
Since $\underline{\h}$ is abelian,  $\underline{g}$ is an isomorphism. Let $\underline{g}^{-1}=\underline{h}$.
Since $\underline{h}\circ \underline{g}=\underline{1}$,
there exist two morphisms $w_1\colon C\to W$ and
$w_2\colon W\to C$ with $W\in \W$ such that $1-hg=w_2w_1$. It follows that $hf=h(gc)=c-w_2w_1c$. Since $f$ is $\W$-monic, there exists a morphism $w_3:B\to W$ such that $w_1c=w_3f$. Hence $c=(h+w_2w_3)f$. Let $h'=h+w_2w_3$. Then we have the following commutative diagram
$$\xymatrix{
A\ar[r]^f \ar@{=}[d]&B \ar[r] \ar[d]^{h'} &U \ar[d] \ar@{-->}[r] &\\
A \ar[r]^c \ar@{=}[d]&C \ar[r] \ar[d]^{g} &S \ar[d] \ar@{-->}[r] &\\
A \ar[r]^{f}  &B \ar[r] &U \ar@{-->}[r] &
}
$$
This implies that $U$ is a direct summand of $B\oplus S$. Since $B$ admits an $\EE$-triangle $V_B\to W_B\to B\dashrightarrow$ with $V_B\in \V$ and $W_B\in \W$, any morphism from $U$ to $B$ factors through $W_B$. Hence $U$ is a direct summand of $S\oplus W_B\in \s\oplus \W$.
\end{proof}

\begin{rem}
Since $\s\oplus \W\subseteq \s* \T$, by the proof of Proposition \ref{gen}, ${_{\rm epi.}}\U\subseteq\s\oplus\W$ implies that $\B^-\cap (\T*{_{\rm epi.}}\U)\subseteq \U$. Hence ${_{\rm epi.}}\U\subseteq\s\oplus\W$ is a sufficient condition for the heart of $((\s,\T),(\U,\V))$ being integral. By duality, so is the condition $\T_{\rm mono.}\subseteq\V\oplus\W$.
\end{rem}


Since $(\s,\T)$ and $(\U,\V)$ are cotorsion pairs, they have abelian hearts by \cite[Theorem 3.2]{LN}. For convenience, we introduce the following notations:
\begin{itemize}
\item[(a1)] $\B^+_1=\{X\in \B \text{ }|\text{ } \mbox{there exists an}~\text{ } \EE\text{-triangle } T\to M\to X\dashrightarrow \text{, }T\in \T \text{ and }M\in \s\cap\T \}$,
\item[(a2)] $\B^-_1=\{Y\in \B \text{ }|\text{ } \mbox{there exists an}~ \text{ } \EE\text{-triangle } Y\to M'\to S\dashrightarrow \text{, }S\in \s \text{ and }M'\in \s\cap\T \}$,
\item[(a3)] $\h_1=\B^+_1\cap \B^-_1$.
\item[(b1)] $\B^+_2=\{X\in \B \text{ }|\text{ } \mbox{there exists an}~\text{ } \EE\text{-triangle } V\to N\to X\dashrightarrow \text{, }V\in \V \text{ and }N\in \U\cap\V \}$,
\item[(b2)] $\B^-_2=\{Y\in \B \text{ }|\text{ } \mbox{there exists an}~ \text{ } \EE\text{-triangle } Y\to N'\to U\dashrightarrow \text{, }U\in \U \text{ and }N'\in \U\cap\V \}$,
\item[(b3)] $\h_2=\B^+_2\cap \B^-_2$.
\end{itemize}
By definition $\h_1/(\s\cap \T)$ is the heart of $(\s,\T)$, and $\h_2/(\U\cap \V)$ is the heart of $(\U,\V)$. For the convenience of the readers, we recall the following lemma.

\begin{lem}\label{FT}\cite[Lemma 4.2.]{LYZ}
Let $f:A\to B$ be a morphism in $\B$.
\begin{itemize}
\item[(1)] Assume that $A,B\in \h_1$, then $f$ factors through $\W$ if and only if it factors through $\s\cap \T$.
\item[(2)] Assume that $A,B\in \h_2$, then $f$ factors through $\W$ if and only if it factors through $\U\cap \V$.
\end{itemize}
\end{lem}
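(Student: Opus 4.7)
The plan is to handle statements (1) and (2) in parallel, and in each to dispatch the ``if'' direction by a one-line inclusion and concentrate on the ``only if'' direction. For the easy direction, the twin cotorsion pair condition $\s\subseteq\U$ immediately gives $\s\cap\T\subseteq\U\cap\T=\W$, and $\V\subseteq\T$ gives $\U\cap\V\subseteq\U\cap\T=\W$; so any factorization through $\s\cap\T$ (resp.\ $\U\cap\V$) is automatically a factorization through $\W$.

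For the nontrivial direction of (1), I would fix a factorization $f=gh$ with $h\colon A\to W$, $g\colon W\to B$, $W\in\W$. Since $A\in\h_1\subseteq\B_1^-$, choose an $\EE$-triangle $A\xrightarrow{a} M'\to S\dashrightarrow$ with $M'\in\s\cap\T$ and $S\in\s$. Because $(\s,\T)$ is a cotorsion pair and $W\in\W\subseteq\T$, we have $\EE(S,W)=0$. The long exact sequence obtained by applying $\Hom_\B(-,W)$ then produces a lift $h'\colon M'\to W$ with $h'a=h$, so $f=(gh')\,a$ factors through $M'\in\s\cap\T$, as desired.

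For the nontrivial direction of (2), the argument is dual but shifted to the other cotorsion pair. Again write $f=gh$ with $W\in\W$, and this time use $B\in\h_2\subseteq\B_2^+$ to fix an $\EE$-triangle $V_B\to N_B\xrightarrow{n_B} B\dashrightarrow$ with $V_B\in\V$ and $N_B\in\U\cap\V$. Now $W\in\W\subseteq\U$ together with the cotorsion pair $(\U,\V)$ forces $\EE(W,V_B)=0$, so applying $\Hom_\B(W,-)$ lifts $g$ through $n_B$ as $g=n_B g'$; thus $f=n_B(g'h)$ factors through $N_B\in\U\cap\V$.

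I do not anticipate a serious obstacle here; the whole argument is a short game with the two orthogonality relations defining the cotorsion pairs. The only delicate point is picking the right one of the four available $\EE$-triangles (the two outgoing and two incoming resolutions of $A$ and $B$ supplied by $\h_1$ or $\h_2$) to resolve against: in (1) one must use the outgoing $\EE$-triangle at $A$ in order to kill an $\EE(-,W)$-term with $S\in\s$, while in (2) one must use the incoming $\EE$-triangle at $B$ in order to kill an $\EE(W,-)$-term with $V_B\in\V$. The other two choices do not give a vanishing $\EE$-group and so cannot be used.
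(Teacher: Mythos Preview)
The paper does not actually supply a proof of this lemma; it merely recalls the statement and cites \cite[Lemma 4.2]{LYZ}. Your argument is correct and is precisely the standard one: the inclusions $\s\cap\T\subseteq\U\cap\T=\W$ and $\U\cap\V\subseteq\U\cap\T=\W$ dispose of the easy directions, and for the nontrivial directions the vanishing $\EE(\s,\T)=0$ (resp.\ $\EE(\U,\V)=0$) lets you factor the intermediate map through the structural $\EE$-triangle witnessing $A\in\B_1^-$ (resp.\ $B\in\B_2^+$), exactly as you describe. Your closing remark about which of the four available $\EE$-triangles must be chosen is also accurate.
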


By this lemma, we have $\h_1/(\s\cap \T)=\underline {\h_1}$ and $\h_2/(\U\cap \V)=\underline {\h_2}$.

\medskip

By \cite[Proposition 4.1]{LYZ}, $\underline \h$ is abelian only if $\underline \h=\underline {\h_1}\cap \underline {\h_2}$. The following example shows that the conditions in Proposition \ref{prop2} are just necessary for the heart of $((\s,\T),(\U,\V))$ being abelian.

\begin{exm}\label{ex3}
Let $A=kQ/I$ be the algebra given in Example \ref{ex2}. We still denote by ``$\bullet$" in the quiver the objects belong to a subcategory and
by  ``$\circ $" the objects do not. Given two subcategories as follows:
\begin{align}
	\tiny{\xymatrix @R=4mm @C2mm{&&&
			{\begin{smallmatrix}\bullet\end{smallmatrix}}\ar[rd]&&
			{\begin{smallmatrix}\bullet\end{smallmatrix}}\ar[rd]&&
			{\begin{smallmatrix}\bullet\end{smallmatrix}}\ar[rd]&&&\\
			\s:=&&{\begin{smallmatrix}\bullet\end{smallmatrix}}\ar[ru]\ar[rd]&&
			{\begin{smallmatrix}\circ\end{smallmatrix}}\ar[ru]\ar[rd]\ar@{.}[ll]&&
			{\begin{smallmatrix}\circ\end{smallmatrix}}\ar[ru]\ar[rd]\ar@{.}[ll]&&
			{\begin{smallmatrix}\circ\end{smallmatrix}}\ar[rd]\ar@{.}[ll]&&\\
			&{\begin{smallmatrix}\bullet\end{smallmatrix}}\ar[ru]\ar[rd]&&
			{\begin{smallmatrix}\circ\end{smallmatrix}}\ar[ru]\ar[rd]\ar@{.}[ll]&&
			{\begin{smallmatrix}\circ\end{smallmatrix}}\ar[ru]\ar[rd]\ar@{.}[ll]&&
			{\begin{smallmatrix}\circ\end{smallmatrix}}\ar[ru]\ar[rd]\ar@{.}[ll]&&
			{\begin{smallmatrix}\bullet\end{smallmatrix}}\ar[rd]\ar@{.}[ll]\\
			{\begin{smallmatrix}\bullet\end{smallmatrix}}\ar[ru]&&\textnormal{$\begin{smallmatrix}\circ\end{smallmatrix}$}\ar[ru]\ar@{.}[ll]&&
			{\begin{smallmatrix}\circ\end{smallmatrix}}\ar[ru]\ar@{.}[ll]&&\textnormal{$\begin{smallmatrix}\circ\end{smallmatrix}$}\ar[ru]\ar@{.}[ll]&&
			{\begin{smallmatrix}\circ\end{smallmatrix}}\ar[ru]\ar@{.}[ll]&&
			{\begin{smallmatrix}\bullet\end{smallmatrix}}\ar@{.}[ll]\\
	}}\notag
\ \ \quad\quad \quad\quad	\tiny{\xymatrix @R=4mm @C2mm{&&&
			{\begin{smallmatrix}\bullet\end{smallmatrix}}\ar[rd]&&
			{\begin{smallmatrix}\bullet\end{smallmatrix}}\ar[rd]&&
			{\begin{smallmatrix}\bullet\end{smallmatrix}}\ar[rd]&&&\\
			\T:=&&{\begin{smallmatrix}\bullet\end{smallmatrix}}\ar[ru]\ar[rd]&&
			{\begin{smallmatrix}\bullet\end{smallmatrix}}\ar[ru]\ar[rd]\ar@{.}[ll]&&
			{\begin{smallmatrix}\circ\end{smallmatrix}}\ar[ru]\ar[rd]\ar@{.}[ll]&&
			{\begin{smallmatrix}\bullet\end{smallmatrix}}\ar[rd]\ar@{.}[ll]&&\\
			&{\begin{smallmatrix}\bullet\end{smallmatrix}}\ar[ru]\ar[rd]&&
			{\begin{smallmatrix}\bullet\end{smallmatrix}}\ar[ru]\ar[rd]\ar@{.}[ll]&&
			{\begin{smallmatrix}\circ\end{smallmatrix}}\ar[ru]\ar[rd]\ar@{.}[ll]&&
			{\begin{smallmatrix}\circ\end{smallmatrix}}\ar[ru]\ar[rd]\ar@{.}[ll]&&
			{\begin{smallmatrix}\bullet\end{smallmatrix}}\ar[rd]\ar@{.}[ll]\\
			{\begin{smallmatrix}\bullet\end{smallmatrix}}\ar[ru]&&\textnormal{$\begin{smallmatrix}\bullet\end{smallmatrix}$}\ar[ru]\ar@{.}[ll]&&
			{\begin{smallmatrix}\bullet\end{smallmatrix}}\ar[ru]\ar@{.}[ll]&&\textnormal{$\begin{smallmatrix}\circ\end{smallmatrix}$}\ar[ru]\ar@{.}[ll]&&
			{\begin{smallmatrix}\circ\end{smallmatrix}}\ar[ru]\ar@{.}[ll]&&
			{\begin{smallmatrix}\bullet\end{smallmatrix}}\ar@{.}[ll]\\
	}}\notag
\end{align}
$(\s,\T)$ is a cotorsion pair and $\s\subset \T$. Let $\U=\T$ and
\begin{align}
	\tiny{\xymatrix @R=4mm @C2mm{&&&
			{\begin{smallmatrix}\bullet\end{smallmatrix}}\ar[rd]&&
			{\begin{smallmatrix}\bullet\end{smallmatrix}}\ar[rd]&&
			{\begin{smallmatrix}\bullet\end{smallmatrix}}\ar[rd]&&&\\
			\V:=&&{\begin{smallmatrix}\circ\end{smallmatrix}}\ar[ru]\ar[rd]&&
			{\begin{smallmatrix}\bullet\end{smallmatrix}}\ar[ru]\ar[rd]\ar@{.}[ll]&&
			{\begin{smallmatrix}\circ\end{smallmatrix}}\ar[ru]\ar[rd]\ar@{.}[ll]&&
			{\begin{smallmatrix}\bullet\end{smallmatrix}}\ar[rd]\ar@{.}[ll]&&\\
			&{\begin{smallmatrix}\circ\end{smallmatrix}}\ar[ru]\ar[rd]&&
			{\begin{smallmatrix}\bullet\end{smallmatrix}}\ar[ru]\ar[rd]\ar@{.}[ll]&&
			{\begin{smallmatrix}\circ\end{smallmatrix}}\ar[ru]\ar[rd]\ar@{.}[ll]&&
			{\begin{smallmatrix}\circ\end{smallmatrix}}\ar[ru]\ar[rd]\ar@{.}[ll]&&
			{\begin{smallmatrix}\bullet\end{smallmatrix}}\ar[rd]\ar@{.}[ll]\\
			{\begin{smallmatrix}\circ\end{smallmatrix}}\ar[ru]&&\textnormal{$\begin{smallmatrix}\circ\end{smallmatrix}$}\ar[ru]\ar@{.}[ll]&&
			{\begin{smallmatrix}\circ\end{smallmatrix}}\ar[ru]\ar@{.}[ll]&&\textnormal{$\begin{smallmatrix}\circ\end{smallmatrix}$}\ar[ru]\ar@{.}[ll]&&
			{\begin{smallmatrix}\circ\end{smallmatrix}}\ar[ru]\ar@{.}[ll]&&
			{\begin{smallmatrix}\bullet\end{smallmatrix}}\ar@{.}[ll]\\
	}}\notag
\end{align}
Then $(\U, \V)$ is a cotorsion pair and $((\s,\T),(\U,\V))$ is a twin cotorsion pair. In this case, $\W=\U=\T$. Obviously the conditions in Proposition \ref{prop2} are satisfied. We have
$$\underline \h=\uB=\add\bigg\{ \begin{smallmatrix}4\\3\end{smallmatrix}\oplus\begin{smallmatrix}5\\4\end{smallmatrix}\oplus\begin{smallmatrix}5\\4\\3\end{smallmatrix}\oplus\begin{smallmatrix}4\end{smallmatrix}\oplus\begin{smallmatrix}5\end{smallmatrix} \bigg\}.$$
$\underline \h$ is not abelian, since
$$\underline \h\nsubseteq \underline \h_1=\add\bigg\{ \begin{smallmatrix}4\\3\end{smallmatrix}\oplus\begin{smallmatrix}5\\4\\3\end{smallmatrix}\oplus\begin{smallmatrix}5\end{smallmatrix} \bigg\}$$
\end{exm}

We give the following sufficient conditions for the heart of $((\s,\T),(\U,\V))$ being abelian.

\begin{prop}\label{prop3}
{\rm (1)} If $\underline{\h}\subseteq\underline{\h_1}$ and ${_{\rm epi.}}\U\subseteq\s\oplus\W$,
then $\underline{\h}$ is abelian.
\vspace{2mm}

{\rm (2)} If $\underline{\h}\subseteq\underline{\h_2}$ and
$\T_{\rm mono.}\subseteq\V\oplus\W$, then $\underline{\h}$ is abelian.
\end{prop}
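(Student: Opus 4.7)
I prove (1); part (2) is strictly dual, interchanging $(\s, \T) \leftrightarrow (\U, \V)$ and $\h_1 \leftrightarrow \h_2$ throughout. First I establish integrality: every $S \oplus W$ with $S \in \s$ and $W \in \W \subseteq \T$ sits in the split $\EE$-triangle $S \to S \oplus W \to W \dashrightarrow$, so $\s \oplus \W \subseteq \s \ast \T$, and the hypothesis ${_{\rm epi.}}\U \subseteq \s \oplus \W$ gives ${_{\rm epi.}}\U \subseteq \s \ast \T$. The proof of Proposition \ref{gen}(1) uses only this inclusion and yields $\B^- \cap (\T \ast {_{\rm epi.}}\U) \subseteq \U$; by Theorem \ref{main1}, $\underline \h$ is integral, hence in particular preabelian.

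Next, recall that in any preabelian category, abelianness is equivalent to every bimorphism (simultaneously mono and epi) being an isomorphism: the canonical coimage-to-image comparison map is always a bimorphism, and a morphism is strict precisely when this comparison is an isomorphism. The hypothesis $\underline\h \subseteq \underline{\h_1}$, combined with Lemma \ref{FT}(1), gives a fully faithful embedding of $\underline\h$ into the abelian heart $\underline{\h_1}$ of $(\s, \T)$. I reduce the claim to: every bimorphism in $\underline\h$ remains a bimorphism in $\underline{\h_1}$—for then it is an isomorphism in the abelian $\underline{\h_1}$, hence in $\underline\h$ by full faithfulness.

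Let $\underline b : A \to B$ be a bimorphism in $\underline\h$. For the epi transfer, the epi-$\EE$-triangle $A \xrightarrow{\svecv{b}{w^A}} B \oplus W^A \to C_b \dashrightarrow$ has $C_b \in {_{\rm epi.}}\U \subseteq \s \oplus \W$, so by Krull-Schmidt and summand-closure of $\s$ and $\W$ one has $C_b \cong S \oplus W'$ with $S \in \s$, $W' \in \W$. Applying an $\EE$-categorical octahedral axiom to the composition $B \oplus W^A \to C_b \to S$, and comparing the $\W$-envelope of $A$ with its $(\s \cap \T)$-envelope via the factorization $A \to W^A \to M^A$ (using that $M^A \in \s \cap \T \subseteq \W$ and $W^A$ is the $\W$-envelope), one produces the $\EE$-triangle structure witnessing that $\underline b$ is epi in $\underline{\h_1}$. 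For the mono transfer, given $\underline g : G \to A$ in $\underline{\h_1}$ with $\underline{bg} = 0$ in $\underline{\h_1}$ (so that $bg$ factors through $\s \cap \T \subseteq \W$), composing $g$ with a $(\s \cap \T)$-envelope of its codomain and exploiting $\W \subseteq \h$ (Lemma \ref{int}), one reduces to a test morphism in $\underline\h$; the mono property of $\underline b$ in $\underline\h$ together with Lemma \ref{FT}(1) then forces $\underline g = 0$ in $\underline{\h_1}$.

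The main obstacle is the epi transfer: the octahedral manipulation requires carefully switching between the $\W$-envelope (used in the $\underline\h$ epi-$\EE$-triangle construction) and the $(\s \cap \T)$-envelope (used in the corresponding $\underline{\h_1}$ construction). This transition is possible precisely because Lemma \ref{FT}(1) identifies $\W$-factoring and $(\s \cap \T)$-factoring on $\h_1$, so the two envelopes differ only by a $\W$-invisible adjustment; the $S \in \s$ summand of $C_b$ furnishes the $\s$-cofiber needed to witness epi in $\underline{\h_1}$.
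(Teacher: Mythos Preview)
Your overall strategy matches the paper's: reduce to showing that every bimorphism in $\underline\h$ remains a bimorphism in the abelian heart $\underline{\h_1}$, then conclude by full faithfulness. However, you have swapped the methods for the two transfers relative to the paper, and your mono transfer has a genuine gap.

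For the mono transfer you argue by testing: given $\underline g: G\to A$ in $\underline{\h_1}$ with $\underline{bg}=0$, you claim one can ``reduce to a test morphism in $\underline\h$''. But $G$ lies in $\h_1$, not necessarily in $\h$, and your suggested move---composing with a $(\s\cap\T)$-envelope and using $\W\subseteq\h$---does not produce a nonzero test morphism in $\underline\h$: any such composite lands in or comes from $\W$, hence vanishes in $\underline\h$. The paper avoids this obstacle entirely by working with the fiber $\EE$-triangle instead: since $\underline b$ is mono in $\underline\h$, the fiber of $\svech{b}{w_B}:A\oplus W_B\to B$ lies in $\T$; then replacing the $\W$-cover $W_B$ of $B$ by an $(\s\cap\T)$-cover $M$ (via $T_1\to M\to W_B$) yields a fiber $T_2\in\T$ for $A\oplus M\to B$, witnessing mono in $\underline{\h_1}$. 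Note this step needs only $\underline\h\subseteq\underline{\h_1}$, not the hypothesis on ${_{\rm epi.}}\U$.

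For the epi transfer you attempt a direct cofiber comparison via octahedra, but the sketch is too vague to verify: switching from the $\W$-envelope $W^A$ to the $(\s\cap\T)$-envelope $M^A$ does not obviously force the new cofiber into $\s$. The paper instead argues by testing here (the method you tried for mono): given $\underline g:B\to C$ in $\underline{\h_1}$ with $\underline{gf}=0$, one uses $\W$-monicity of $\svecv{f}{w}$ to obtain $(g,-m_2r)\svecv{f}{w}=0$, factors $(g,-m_2r)$ through the cofiber $U=S\oplus W$, and then observes that the $S$-component of the resulting factorization of $g$ passes through $\s\cap\T$ because $C\in\h_1$ admits a $(\s\cap\T)$-cover with fiber in $\T$ and $\EE(S,\T)=0$. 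This is precisely where the decomposition $U=S\oplus W$ is used, and it makes the argument short.
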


\begin{proof}
We only show (1), the proof of (2) is by duality.

To prove that $\underline{\h}$ is abelian,
we need to show that any monic-epic morphism in $\underline{\h}$ is an isomorphism.

Assume that $\underline{f}\colon A\to B$ is monic and epic in $\underline{\h}$. For convenience, we assume that
$A$ and $B$ have no direct summand in $\W$. Since $\underline \h\subseteq \underline {\h_1}$ and $\underline {\h_1}$ is abelian, we only need to check that $\underline f$ is also monic and epic in $\underline {\h_1}$
\vspace{1mm}

Since $B\in\h$, it admits an $\EE$-triangle $V_B \rightarrow W_B\xrightarrow{w_B} B \dashrightarrow$ with $W_B\in \W$ and $V_B\in \V$. Thus we can obtain a
commutative diagram
$$\xymatrix{
V_B \ar[r] \ar@{=}[d] &T \ar[r] \ar[d]&A \ar@{-->}[r] \ar[d]^f &\\
V_B \ar[r] &W_B \ar[r]^{w_B} &B \ar@{-->}[r] &
}
$$
which induces an $\EE$-triangle $T\longrightarrow A\oplus W_B\xrightarrow{(f~w_B)} B\dashrightarrow$, where $T\in \T$ since $\underline f$ is a monomorphism in $\underline \h$. Since $W_B$ admits an $\EE$-triangle $T_1\to M\xrightarrow{m} W_B\dashrightarrow$ with $M\in \s\cap \T$ and $T_1\in \T$, we have the following commutative diagram
$$\xymatrix{
T_1 \ar@{=}[r] \ar[d] &T_1 \ar[d]\\
T_2 \ar[r]\ar[d] &A\oplus M \ar[r]^-{(f~w_Bm)} \ar[d]^{\left( \begin{smallmatrix}1&0\\0&m\end{smallmatrix} \right)} &B \ar@{=}[d] \ar@{-->}[r] &\\
T \ar[r]\ar@{-->}[d] &A\oplus W_B \ar[r]_-{(f~w_B)} \ar@{-->}[d] &B \ar@{-->}[r] &\\
&&
}
$$
with $T_2\in \T$. The second row of the diagram above shows that $\underline{f}$ is monic in $\underline{\h_1}$.
\vspace{1mm}


Now assume that there is morphism $\underline{g}\colon B\to C$ in $\underline{\h_1}$ such that $\underline{gf}=0$ in $\underline {\h_1}$. We also assume that $C$ has no direct summand in $\W$. Then there exist two morphisms $m_1\colon A\to M'$
and $m_2\colon M'\to C$ such that $gf=m_2m_1$ where $M'\in\s\cap\T$.

Since $\underline f$ is an epimorphism in $\underline \h$, we can get an $\EE$-triangle
$$A\xrightarrow{\svecv{f}{w}} B\oplus W^A\longrightarrow U\dashrightarrow$$
where $W^A\in \W$, $U\in {_{\rm epi.}}\U$ and $w$ is $\W$-monic. So there exists a morphism $r\colon W^A\to M'$ such that
$m_1=rw$. It follows that
$$(g~-m_2r)\svecv{f}{w}=gf-m_2rw=gf-m_2m_1=0.$$
Since ${_{\rm epi.}}\U\subseteq\s\oplus\W$, let
$U=S\oplus W$ where $S\in\s$ and $W\in\W$.
We can rewrite the $\EE$-triangle above in the following way:
$$A\xrightarrow{\svecv{f}{w}} B\oplus W^A\xrightarrow{\left(\begin{smallmatrix}b& w_2\\
w_1&w_3\end{smallmatrix}\right)}S\oplus W\dashrightarrow.$$
Then there exists a morphism $(c_1~c_2)\colon S\oplus W\to C$
such that $(g~-m_2r)=(c_1~c_2)\left(\begin{smallmatrix}b& w_2\\
w_1&w_3\end{smallmatrix}\right)$.
In particular, we have $g=(c_1~c_2)\svecv{b}{w_1}=c_1b+c_2w_1$.

Since $C\in\h_1$, it admits an $\EE$-triangle $T_C\to M_C\xrightarrow{~u~}C\dashrightarrow$
where $T_C\in\T$ and $M_C\in\s\cap\T$.
Apply the functor $\Hom_{\B}(S,-)$ to the $\EE$-triangle above,
 we have the following exact sequence:
 $$\Hom_{\B}(S,M_C)\xrightarrow{\Hom_{\B}(S,\hspace{0.5mm}u)}
 \Hom_{\B}(S,C)\longrightarrow \EE(S,T_C)=0.$$
So there exists a morphism $m_1\colon S\to M_C$
such that $um_1=c_1$. This shows that
$c_1$ factors through $\s\cap\T$. Hence $g=c_1b+c_2w_1$ factors through $\W$. By Lemma \ref{FT}, $\underline{g}=0$ in $\underline{\h_1}$.
This shows that $\underline{f}\colon A\to B$ is epic in $\underline{\h_1}$. Thus $\underline f$ is an isomorphism.
\end{proof}

In summary, we have the following result.

\begin{thm}\label{main2}
$\underline \h$ is abelian if and only if the following conditions are satisfied:
\begin{itemize}
\item[(1)] $\underline \h=\underline {\h_1}\cap \underline {\h_2}$;
\vspace{1mm}

\item[(2)] ${_{\rm epi.}}\U\subseteq\s\oplus\W$;
\vspace{1mm}

\item[(3)] $\T_{\rm mono.}\subseteq\V\oplus\W$.
\end{itemize}
%
\end{thm}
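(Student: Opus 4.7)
The plan is to deduce the theorem by assembling the results already developed in this section, treating the two directions separately. No new technical estimate is needed; the work is in identifying which preliminary statement supplies each implication.

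For the ``only if'' direction, suppose $\underline{\h}$ is abelian. Condition (1) is immediate from the result \cite[Proposition 4.1]{LYZ} that was recalled just before Example \ref{ex3}, which asserts that abelianness of $\underline{\h}$ forces $\underline{\h}=\underline{\h_1}\cap\underline{\h_2}$. Conditions (2) and (3) are then exactly the conclusion of Proposition \ref{prop2}, which says that whenever $\underline{\h}$ is abelian one automatically has ${_{\rm epi.}}\U\subseteq \s\oplus\W$ and $\T_{\rm mono.}\subseteq \V\oplus\W$. So (1), (2), (3) each follow from a single cited implication, with nothing to add.

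For the ``if'' direction, assume (1), (2), (3) hold. Condition (1) gives in particular the inclusion $\underline{\h}\subseteq \underline{\h_1}$. Together with condition (2), this places us in the hypotheses of Proposition \ref{prop3}(1), which immediately yields that $\underline{\h}$ is abelian. (Dually, one may combine $\underline{\h}\subseteq \underline{\h_2}$ from (1) with condition (3) and invoke Proposition \ref{prop3}(2); either route works.)

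Note that for the forward direction one uses only one of (2) and (3) at a time, so the real content of the theorem is that (1) together with \emph{either} of (2), (3) is enough, while the converse forces all three. The main obstacle is therefore not in this proof at all but in Propositions \ref{prop2} and \ref{prop3}, where the interplay between $\underline{\h}$, $\underline{\h_1}$, $\underline{\h_2}$ and the explicit descriptions of ${_{\rm epi.}}\U$ and $\T_{\rm mono.}$ via $\s\oplus\W$ and $\V\oplus\W$ was established. The only point to verify when writing it up is that the LYZ citation really does go in the direction needed, i.e.\ it states that abelianness of $\underline{\h}$ implies $\underline{\h}=\underline{\h_1}\cap\underline{\h_2}$ (and not merely the converse).
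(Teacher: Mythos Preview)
Your proof is correct and matches the paper's own argument exactly: the paper simply cites \cite[Proposition 4.1]{LYZ}, Proposition \ref{prop2}, and Proposition \ref{prop3} for the two directions in precisely the way you describe. Your additional observation that only one of (2), (3) is actually needed for the ``if'' direction (via Proposition \ref{prop3}(1) or (2)) is accurate and goes slightly beyond what the paper makes explicit.
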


\proof This follows from \cite[Proposition 4.1]{LYZ}, Proposition \ref{prop2} and Proposition \ref{prop3}.  \qed

\vspace{0.5cm}

\hspace{-4mm}\textbf{Data Availability}\hspace{2mm} Data sharing not applicable to this article as no datasets were generated or analysed during
the current study.
\vspace{2mm}

\hspace{-4mm}\textbf{Conflict of Interests}\hspace{2mm} The authors declare that they have no conflicts of interest to this work.

\vspace{0.5cm}

\end{document}